\newtheorem{lemma}{Lemma}[section]
\newtheorem{proposition}[lemma]{Proposition}
\newtheorem{theorem}[lemma]{Theorem}
\theoremstyle{definition}
\newtheorem{remark}[lemma]{Remark}
\numberwithin{equation}{section}
\newcommand{\Bflat}{\ensuremath \mathcal{B}_{ft}}
\newcommand{\ib}{\ensuremath \displaystyle \int_{\Bflat^+}}
\newcommand{\ibm}{\ensuremath \displaystyle \int_{\Bflat^-}}
\thanks{\it 2020 Mathematics Subject
Classification: 35J25, 35J60, 35B65}
\thanks{$^*$corresponding author}
\title[Optimal second order boundary regularity]{Optimal second order boundary regularity for solutions to $p$-Laplace equations.}
\email{montoro@mat.unical.it., muglia@mat.unical.it, sciunzi@mat.unical.it}
\address{Department of Mathematics and Computer Science, UNICAL, Rende (CS), Italy}
\author[L.\ Montoro]{Luigi Montoro}
\author[L.\ Muglia]{Luigi Muglia}
\author[B.\ Sciunzi]{Berardino Sciunzi $^*$}
\begin{document}

\begin{abstract}
Solutions to $p$-Laplace equations are not, in general, of class $C^2$. The study of  Sobolev regularity of the second derivatives is, therefore,  a crucial issue. 
An important contribution by Cianchi and Maz'ya shows that, if the source term is in $L^2$, then the field $|\nabla u|^{p-2}\nabla u$ is in $W^{1,2}$. The $L^2$-regularity of the source term is also a necessary condition. Here, under suitable  assumptions, we obtain sharp second order estimates, thus proving  the optimal regularity of the vector field $|\nabla u|^{p-2}\nabla u$, up to the boundary.  
\end{abstract}
\keywords{Boundary regularity, second-order derivatives, $p$-Laplacian, Dirichlet problems, Neumann problems}

\maketitle

\section{Introduction}
We deal with the study of the regularity of weak solutions to
\begin{equation}\label{EP}
	-\Delta_p u=f(x)
\end{equation}
in a bounded smooth domain $\Omega\subset \mathbb{R}^n$, $p>1$. In all the paper we will assume that
\[
f\in W^{1,1}(\Omega)\cap L^q(\Omega),\qquad q>N\,.
\]
We shall take into account the classical Dirichlet or Neumann boundary condition. 
Namely $u\in W_0^{1,p}(\Omega)$ and
\begin{equation}\label{wsD}
	\int_\Omega|\nabla u|^{p-2}\langle \nabla u,\nabla \varphi\rangle dx=\int_\Omega f\varphi dx, \qquad \varphi\in C_c^\infty(\Omega)
\end{equation}
for the Dirichlet condition, or $u\in W^{1,p}(\Omega)$ and
\begin{equation}\label{wsN}
	\int_\Omega|\nabla u|^{p-2}\langle \nabla u,\nabla \varphi\rangle dx=\int_\Omega f\varphi dx, \qquad \varphi\in C^\infty(\Omega)
\end{equation}
for the Neumann condition with  $|\nabla u|^{p-2}\partial_\nu u=0$ on $\partial \Omega$, where $\nu$ is the outward normal vector. Note that, by \cite{DiB, Lieb, LiebQ}, see also \cite{DuMi1, DuMi2, Ev, LaUr, Tolk, Uh}, we have that
$u\in C^{1,\alpha}(\bar{\Omega})$. As it is well known, for the Neumann case, we implicitly assume that the source term $f$ has zero mean.\\

\noindent We consider here the issue regarding the study of the summability of the second derivatives, up to the boundary. The Calder\'on-Zygmund theory is  not completely well understood in the literature but very deep results have been achieved thanks to the works \cite{AvKuMi, CiMa1, CiMa3, CiMa2, Ce,  CoMi, DS, DeFMi, DeTh,  DuMi1, DuMi2, KuMi, GuMo, Lou, Mi1, Mi2, Si}. 

 A celebrated   result in the theory of elliptic equations states that, if 
$$
\Delta u=f
$$
with right term $f\in L^2(\Omega)$,  then $u\in W^{2,2}(\Omega)$.  
In \cite{CiMa} the authors prove the natural analogue to this classical result showing that 
\begin{equation}\label{CM}
|\nabla u|^{p-2}\nabla u\in W^{1,2}(\Omega) \mbox{ iff } f\in L^2(\Omega),
\end{equation}
under minimal assumptions on the regularity of the domain, when Dirichlet or Neumann boundary condition is imposed.

\

\noindent Here, under suitable stronger assumptions, we obtain sharp second order estimates, thus proving  the optimal regularity of the vector field $|\nabla u|^{p-2}\nabla u$, up to the boundary.

\begin{theorem}\label{Reg}
	Let $u\in C^{1,\alpha}(\bar{\Omega})$ be a weak solution of \eqref{EP}  with respect to \textit{Dirichlet condition} or \textit{Neumann condition}. Assume that $\Omega$ is of class $C^3$
	and
	\[
	f\in W^{1,1}(\Omega)\cap L^q(\Omega), \qquad q>N.
	\]
	Setting $V_\alpha:=|\nabla u|^{\alpha-1}\nabla u$. Then 	\begin{center}
		$V_\alpha\in W^{1,2}(\Omega)$ for $\displaystyle \alpha>\frac{p-1}{2}$.
	\end{center}
	As a corollary, for $p<3$, we deduce that $u\in W^{2,2}(\Omega)$.
	
\end{theorem}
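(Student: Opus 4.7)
Since $|\nabla V_\alpha|^2$ is pointwise comparable, up to constants depending only on $p$ and $\alpha$, to $|\nabla u|^{2(\alpha-1)}|D^2 u|^2$ (with the convention that the combination vanishes where $\nabla u=0$), the conclusion is equivalent to the weighted second-derivative estimate
\[
\int_\Omega |\nabla u|^{2(\alpha-1)}|D^2 u|^2\,dx<\infty.
\]
The plan is a regularization scheme: replace the $p$-Laplacian by the non-degenerate operator with coefficient $(|\nabla u_\varepsilon|^2+\varepsilon)^{(p-2)/2}$, mollify the source to $f_\varepsilon$ (preserving zero mean in the Neumann case), and retain the Dirichlet or Neumann boundary condition. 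Standard theory yields $u_\varepsilon\in C^{2,\beta}(\bar\Omega)$, convergence $u_\varepsilon\to u$ in $C^{1,\gamma}_{\mathrm{loc}}$, and---thanks to $f\in L^q$ with $q>N$---a uniform $L^\infty$ bound on $\nabla u_\varepsilon$. The core task is then the $\varepsilon$-independent bound
\[
\int_\Omega (|\nabla u_\varepsilon|^2+\varepsilon)^{\alpha-1}|D^2 u_\varepsilon|^2\,dx \le C,
\]
whence weak lower semicontinuity delivers $V_\alpha\in W^{1,2}(\Omega)$.

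\textbf{Interior estimate.} In the spirit of \cite{CiMa}, I would differentiate the regularized equation in each coordinate direction, test the $i$-th derived equation with $\eta^2(|\nabla u_\varepsilon|^2+\varepsilon)^{\alpha-(p-1)/2}\partial_i u_\varepsilon$ for a standard cut-off $\eta$, sum in $i$, and integrate by parts. The ellipticity of the linearization produces exactly the weighted $|D^2 u_\varepsilon|^2$ integral on the left-hand side. The term generated by $\partial_i f_\varepsilon$ on the right is handled by shifting one derivative back onto the test function and invoking $f\in W^{1,1}\cap L^q$ together with Young's inequality. The restriction $\alpha>(p-1)/2$ is precisely what makes the test function admissible---the factor $(|\nabla u_\varepsilon|^2+\varepsilon)^{\alpha-(p-1)/2}|\partial_i u_\varepsilon|$ being then uniformly bounded---and keeps the Young constant finite without touching the weight threshold.

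\textbf{Boundary estimate.} Cover $\partial\Omega$ by finitely many $C^3$ charts $\Phi$ flattening the boundary onto $\{y_N=0\}\cap \Bflat$. In the Neumann case the transformed condition reduces to $\partial_{y_N}\tilde u_\varepsilon=0$ on the flat face, so an even reflection of $\tilde u_\varepsilon$ to $\Bflat$ produces a function that satisfies an equation of the same structure across the hyperplane, and the interior argument applies. In the Dirichlet case the tangential derivatives $\partial_{y_j}\tilde u_\varepsilon$ ($j<N$) vanish on the flat face, so the analogue test function $\eta^2(|\nabla \tilde u_\varepsilon|^2+\varepsilon)^{\alpha-(p-1)/2}\partial_{y_j}\tilde u_\varepsilon$ is admissible and produces no uncontrolled boundary contribution under integration by parts. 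This controls all second derivatives carrying at least one tangential index. The remaining $\partial_{y_N y_N}\tilde u_\varepsilon$ is extracted algebraically from the transformed equation itself.

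\textbf{Main obstacle and corollary.} The subtle step is precisely this normal-normal extraction in the Dirichlet case: solving the transformed equation for $\partial_{y_N y_N}\tilde u_\varepsilon$ produces a coefficient of the form $(|\nabla \tilde u_\varepsilon|^2+\varepsilon)^{(p-2)/2}$ times a minor of the linearized operator that degenerates where $\nabla u$ does, and $\alpha>(p-1)/2$ is exactly what ensures that, after multiplication by the weight $(|\nabla \tilde u_\varepsilon|^2+\varepsilon)^{\alpha-1}$, the resulting singular factor pairs via Cauchy--Schwarz with the already-controlled tangential terms and with $f$. The lower-order perturbations generated by the chart are harmless because of the $C^3$ smoothness of $\Phi$, so the estimate survives the pullback by $\Phi^{-1}$. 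A finite covering plus an interior patch assembles the global uniform bound, and sending $\varepsilon\to 0$ concludes $V_\alpha\in W^{1,2}(\Omega)$. For the corollary, $p<3$ gives $(p-1)/2<1$, hence the theorem applies with $\alpha=1$, yielding $V_1=\nabla u\in W^{1,2}(\Omega)$, equivalently $u\in W^{2,2}(\Omega)$.
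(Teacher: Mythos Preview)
Your strategy differs from the paper's in two ways: you regularize with smooth approximants $u_\varepsilon$, whereas the paper works directly on $u$, using the Cianchi--Maz'ya $W^{1,2}$ bound for $|\nabla u|^{p-2}\nabla u$ as an a priori input (Remark~\ref{remregcianchi}); and for the Dirichlet boundary you propose tangential testing plus algebraic extraction of $\partial_{y_Ny_N}$, whereas the paper extends $w=u\circ\Phi$ \emph{oddly} across $\{y_N=0\}$ (evenly for Neumann) so that all directions $j=1,\dots,N$ are handled by the single test function $G_\varepsilon(w_j)|w_j|^{-\beta}\varphi^2$ in the linearized equation (Lemma~\ref{lem:comsup}, Theorem~\ref{stimad2}). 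The cancellation of the boundary integrals arising in that computation relies on the block structure of $K|_{\{y_N=0\}}$ in \eqref{kappaBordo}, which is specific to Fermi coordinates and is the reason the paper insists on them rather than an arbitrary $C^3$ chart.

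Your Dirichlet step has a genuine gap. Solving the transformed equation for $\tilde u_{\varepsilon,NN}$ gives
\[
|\tilde u_{\varepsilon,NN}|\ \lesssim\ (\varepsilon+|\nabla\tilde u_\varepsilon|^2)^{-(p-2)/2}|f_\varepsilon|\ +\ |D^2\tilde u_\varepsilon|_{\mathrm{tang}},
\]
so after multiplying by the weight $(\varepsilon+|\nabla\tilde u_\varepsilon|^2)^{\alpha-1}$ you must control
\[
\int (\varepsilon+|\nabla\tilde u_\varepsilon|^2)^{\alpha-p+1}|f_\varepsilon|^2\,dx.
\]
For $\tfrac{p-1}{2}<\alpha<p-1$ the exponent $\alpha-p+1$ is negative, and no $\varepsilon$-uniform bound is available without integrability of a negative power of $|\nabla u|$---precisely what Proposition~\ref{Reg2} supplies, but only under a sign condition on $f$ that Theorem~\ref{Reg} does not assume. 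Cauchy--Schwarz against the tangential terms cannot absorb this singularity. The paper's odd-reflection device sidesteps the issue because it never divides by the degenerate coefficient, and the full range $\alpha>\tfrac{p-1}{2}$ (equivalently $\beta=p-2\alpha<1$ in \eqref{eq stima hessiano locale}) comes from a single test-function computation. As a minor aside, the exponent in your interior test function should be $\alpha-p/2$ rather than $\alpha-(p-1)/2$ for the left-hand side to carry the weight $(\varepsilon+|\nabla u_\varepsilon|^2)^{\alpha-1}$; with that correction, $\alpha>(p-1)/2$ is indeed the sharp threshold for boundedness of the test function.
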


\

\begin{remark}
	Note that 	by Theorem \ref{Reg} with $\alpha=p-1$ we recover
	$$
	|\nabla u|^{p-2}\nabla u\in W^{1,2}(\Omega).
	$$
	This is the result of Cianchi and Maz'ya  that was proved in \cite{CiMa} under the optimal assumption $f\in L^2(\Omega)$. 
\end{remark}
This kind of results are actually based on the idea of considering the vector field as a single weighted element. By the way in many applications it is better to know a pure Sobolev regularity of the second derivatives, as for the case $p<3$ in Theorem~\ref{Reg}. When $f$ does not vanish we have the following:
\begin{theorem}\label{Reg1}
	Let $u\in C^{1,\alpha}(\bar{\Omega})$ be a weak solution of \eqref{EP}  under  \textit{Dirichlet } or \textit{Neumann } condition. Assume that $\Omega$ is of class $C^3$
	and
	\[
	f\in W^{1,1}(\Omega)\cap L^q(\Omega), \,\, q>N\quad with \quad  f\leq -\tau <0\,\, \text{or}\,\, f\geq \tau >0 \quad a.e. \,\,in \quad  \Omega\,.
	\]
 Then 
\[u\in W^{2,q}(\Omega),\,\, 1\leq q<{(p-1)}/{(p-2)},\quad\text{if}\,\, p \geq3\,.\]
\end{theorem}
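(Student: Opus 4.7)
The plan is to deduce Theorem~\ref{Reg1} from Theorem~\ref{Reg} combined with a weighted summability estimate for $|\nabla u|^{-1}$ that is forced by the sign assumption $|f|\ge \tau>0$. The idea is classical: Theorem~\ref{Reg} provides a weighted $L^2$ estimate on $D^2 u$ with weight $|\nabla u|^{p-3+2\epsilon}$ (any $\epsilon>0$); to remove the weight one pays with negative powers of $|\nabla u|$, and the admissible range of $q$ is dictated by the summability of those negative powers.

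First I would apply Theorem~\ref{Reg} with $\alpha=(p-1)/2+\epsilon$ for $\epsilon>0$ small. Differentiating $V_\alpha=|\nabla u|^{\alpha-1}\nabla u$ yields the pointwise bound $|\nabla V_\alpha|^2\ge c_p\,|\nabla u|^{2(\alpha-1)}|D^2u|^2$ on $\{\nabla u\ne 0\}$ (with $c_p>0$), so Theorem~\ref{Reg} gives
\begin{equation*}
\int_\Omega |\nabla u|^{2(\alpha-1)}|D^2 u|^2\,dx<+\infty.
\end{equation*}
Next I would establish the weighted estimate
\begin{equation*}
\int_\Omega |\nabla u|^{-\beta}\,dx<+\infty \qquad \text{for every } 0\le \beta<p-1,
\end{equation*}
which is precisely where the hypothesis $|f|\ge \tau>0$ enters. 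Expanding $-\Delta_p u=f$ as $-|\nabla u|^{p-2}\Delta u-(p-2)|\nabla u|^{p-4}\langle D^2u\,\nabla u,\nabla u\rangle=f$ forces $|\nabla u|^{p-2}|D^2u|\gtrsim \tau$ at non-critical points, and a test-function argument of Damascelli--Sciunzi type on the linearized $p$-Laplacian then yields integrability of $|\nabla u|^{-\beta}$ strictly below the critical threshold $p-1$, both in the interior and up to $\partial\Omega$ after boundary flattening.

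Once both ingredients are in place, since $q<(p-1)/(p-2)<2$ for $p\ge 3$, I would apply H\"older's inequality with exponents $2/q$ and $2/(2-q)$ to split
\begin{equation*}
\int_\Omega |D^2u|^q\,dx\le \Bigl(\int_\Omega |\nabla u|^{2(\alpha-1)}|D^2u|^2\,dx\Bigr)^{q/2}\Bigl(\int_\Omega |\nabla u|^{-\frac{2q(\alpha-1)}{2-q}}\,dx\Bigr)^{(2-q)/2}.
\end{equation*}
The first factor is finite by the preceding step; the second is finite provided $2q(\alpha-1)/(2-q)<p-1$. Letting $\alpha\downarrow (p-1)/2$, this condition becomes $q(p-3)<(p-1)(2-q)$, which rearranges exactly to $q(p-2)<p-1$, i.e., $q<(p-1)/(p-2)$. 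This is the claimed range, and the same scheme applies verbatim to both Dirichlet and Neumann cases since Theorem~\ref{Reg} covers both and the weighted estimate depends only on $|f|\ge\tau$ and the local algebraic form of the equation.

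The principal obstacle is the weighted summability $\int_\Omega|\nabla u|^{-\beta}\,dx<+\infty$ for every $\beta<p-1$ up to the boundary. While the interior analogue goes back to Damascelli--Sciunzi-type arguments, carrying it to $\partial\Omega$, particularly under the Neumann condition where the tangential part of $\nabla u$ can degenerate on $\partial\Omega$, requires a careful choice of cut-offs in the $C^3$-flattening framework underlying Theorem~\ref{Reg}, and a verification that $|f|\ge\tau$ prevents $|\nabla u|$ from collapsing faster than the one-dimensional model profile $|x|^{1/(p-1)}$ near the critical set.
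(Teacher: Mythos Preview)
Your proposal is correct and follows essentially the same route as the paper: a H\"older splitting of $\int|D^2u|^q$ into the weighted Hessian term $\int|\nabla u|^{p-2-\beta}|D^2u|^2$ (for $\beta<1$) and the inverse-weight term $\int|\nabla u|^{-(p-1)\sigma}$ (for $\sigma<1$), the latter being precisely the paper's Proposition~\ref{Reg2}. The only cosmetic difference is that you invoke Theorem~\ref{Reg} and then read off the weighted $L^2$ Hessian bound from the pointwise inequality $|\nabla V_\alpha|^2\ge c_\alpha|\nabla u|^{2(\alpha-1)}|D^2u|^2$, whereas the paper uses the underlying estimate (Theorem~\ref{stimad2}) directly; since Theorem~\ref{Reg} is itself derived from Theorem~\ref{stimad2}, the two are equivalent, and your identification of the boundary summability of $|\nabla u|^{-\beta}$ as the principal obstacle matches exactly what Proposition~\ref{Reg2} handles via the test function $\psi=\varphi^2/(\varepsilon+|\nabla\tilde w|^{p-1})^\sigma$ in the flattened problem.
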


\

\begin{remark} \textbf{Optimality of the result}:\\
	\noindent 
	Note that,  our regularity results are sharp, already in the case of interior estimates. Regarding Theorem \ref{Reg}, this can be deduced by the example involving   $u\,:=\, x_1^{p'}/p'$ with $x_1\geq 0$,  that solves $\Delta_p \,u=1$. 
	The same example shows the optimality of the result of  Theorem \ref{Reg1} ($p\geq 3$)
since  $u\in W^{2,q}(\Omega)$ only for $q<(p-1)/(p-2)$. Finally let us stress the fact that, under our general assumptions, our results are new also restricting the attention to the case of  interior estimates.
\end{remark}

Before starting our proofs, for the reader's convenience, we anticipate a key idea in our technique. 
It is well know that flattening arguments may be of use in the study of the regularity theory. Unfortunately the transformed equation, in general, is not in divergence form. We make use of a fine argument based on the Fermi coordinates to deal with the difficulty that arises and to manage many boundary terms. Actually the use of Fermi coordinates is a key ingredient in our proofs that can not be carried out  with standard flattening arguments. 
Once this procedure is well understood, we deduce our regularity results exploiting, also, some ideas from \cite{DS} for the study of local interior estimates. The extension of such sharp estimates up to the boundary is completely  not trivial and it has been an open problem till now.

\section{Flattening via Fermi coordinates}\label{sectionfermi}
In this section we introduce some notation and preliminary facts about the flattening operator.  Let us start with an assumption on the domain that we will recall in all the statements of our results\\
\begin{center}
	{\bf ($H_\Omega$)}  $\Omega$ is a  bounded $C^3$-smooth domain.
\end{center}\ \\
Then,  for any $\bar{x}\in \partial \Omega$, there exists $r>0$ and a ball $B_r(\bar{x})$ such that $\partial \Omega\cap B_r(\bar{x})$ can be represented  by a $C^3$-mapping $\gamma:D\subset \mathbb{R}^{N-1}\to\mathbb{R}^N$ with $\bar{D}\subset\mathbb{R}^N$ compact (and locally the domain is an epigraph or an hypograph).  For any $\varepsilon>0$ set
$$
B^+_{\varepsilon}:=\{x\in \Omega: \ d(x,\partial\Omega)<\varepsilon\}\cap B_r(\bar{x}).
$$
 Then there exists (see e.g. \cite{Foote}) $\varepsilon_0>0$ such that $B^+_{\varepsilon_0}$ has the unique nearest point property,  i.e.  for any $y\in B^+_{\varepsilon_0}$ there exists a unique $x\in \partial \Omega$ such that $d(x,y)=d(y,\partial \Omega)$.  

Then let us define the \emph{flattening operator} $\Phi:\mathbb{R}^n\to \mathbb{R}^n$ as follows: for all $x\in B^+_{\varepsilon_0}$, let $\bar{x}=\gamma(y_1,\ldots, y_{N-1})\in \partial \Omega$ and $^i_{\bar{x}}\eta=\ ^i\eta$ be the inward normal vector in $\bar{x}$. Then 
\begin{equation}\label{ft}
	x=\Phi(y):=\gamma(y_1,\ldots, y_{N-1})+{y_N} ^i\eta,
\end{equation}	
i.e. $x=(x_1\ldots,x_N)$ is such that
$$
x_j:=\gamma_j(y_1,\ldots,y_{N-1})+y_N \ ^i_x\eta_j(y_1,\ldots,y_{N-1}), \qquad \forall j=1,\ldots, N.
$$

Note that in the new local-coordinate the hyperplane $\{y_N=0\}$ represents (locally) the boundary points of $\Omega$.\\
In the sequel we will indicate by $\Bflat^+$ the set of the $y$-space such that 
\begin{equation}\label{Bflat}
	B_{\varepsilon_0}^+=\Phi(\Bflat^+).
\end{equation}
For our scope, it will be necessary to define $\Bflat^-$ as the reflection of $\Bflat^+$ with respect to the hyperplane $\{y_N=0\}$ and 
$$\Bflat:=\Bflat^+\cup \Bflat^-$$ 
respectively.\\
Let us consider problems \eqref{wsD} and \eqref{wsN} and let us see how they are transformed by   the  flattening Fermi operator \eqref{ft}.\\

For a given $\varphi\in C_c^\infty(B_{\varepsilon_0}^+)$ ($\varphi\in C^\infty(B_{\varepsilon_0}^+)$ for the Neumann problem,  respectively),  $(\varphi\circ\Phi)$ lies in $C_c^\infty(\Bflat^+)$ ($\varphi\in C^\infty(\Bflat^+)$ for the Neumann problem, respectively), hence $\eqref{wsD}$  and $\eqref{wsN}$ can be rewritten as
\begin{eqnarray}\label{tf}
	\nonumber&&\int_{\Bflat^+}
	|\nabla_x u(\Phi(y))|^{p-2}\langle\nabla_x u(\Phi(y)),\nabla_x\varphi(\Phi(y))\rangle |\hbox{det} J_\Phi(y)|dy\\
	&=&\int_{\Bflat^+}f(\Phi(y))\varphi(\Phi(y))|\hbox{det} J_\Phi(y)|dy.
\end{eqnarray}\ \\
In what follows, let us set
$$w(y):=u(\Phi(y)), \  g(y):=f(\Phi(y)),  \ \phi:=\varphi(\Phi(y)),  \  \delta(y):=|{det} J_\Phi(y)|.
$$
It is easy to check that
$$
\nabla_y w(y)= J_\Phi(y)^T\nabla_x u(\Phi(y))\,,
\quad
\nabla_y \phi(y)
= J_\Phi(y)^T\nabla_x \varphi(\Phi(y)).
$$
Then, defining $A(y):=[J_\Phi(y)^T]^{-1}$ and $K(y):=A(y)^T A(y)$
our equation \eqref{tf} can be written as
	\begin{eqnarray}\label{t1}
			\int_{\Bflat^+}
			|A(y)\nabla w(y)|^{p-2}\langle K(y)\nabla w(y),\nabla \phi(y)\rangle \delta(y)
			dy
			=\int_{\Bflat^+}g(y)\phi(y)\delta(y)dy,
		\end{eqnarray}
		for any $\phi \in C_c^\infty(\Bflat^+)$ (respectively for any  $\phi \in C^\infty(\Bflat^+)$ for the Neumann problem).
		\begin{remark}\label{rem2}
			Note that, by the regularity of $\gamma$, there exists $C_A, c_A>0$ such that, for any $v\in \mathbb{R}^n$
			$$
			c_A|v|\leq |Av|\leq C_A|v|
			$$
			In a similar way let us consider $C_K, c_K$ two positive number such that $$c_K|v|\leq |Kv|\leq C_K|v|\,.$$
		\end{remark}
		Since for any invertible matrix $C$ the equality $(C^T)^{-1}=(C^{-1})^T$ holds, then
			\begin{equation}
				K=K(y):=A^T A=((J_\Phi^T)^{-1})^T(J_\Phi^T)^{-1}=(J_\Phi^TJ_\Phi)^{-1}
			\end{equation}
			therefore we can write $K$ as 
			\begin{equation}\label{kappagen}
				K=\left[\left (
				\begin{tabular}{lll}
					$\cdots$ & $\partial_{y_1}(\gamma+y_N \ ^i\eta)$ & $\cdots$ \\
					$\cdots$ & $\partial_{y_2}(\gamma+y_N \ ^i\eta)$ & $\cdots$ \\
					$\cdots$ & $\cdots$ & $\cdots$\\
					$\cdots$ & $\quad ^i\eta$ & $\cdots$ \\
				\end{tabular}
				\right ) \left (
				\begin{tabular}{lll}
					$(\nabla \gamma_1+y_N\nabla (^i\eta_1))$ & $|$ & $^i\eta_1$\\
					$(\nabla \gamma_2+y_N\nabla (^i\eta_2))$ & $|$ & $^i\eta_2$\\
					$\vdots$ & $|$ & $\vdots$\\
					$(\nabla \gamma_N+y_N\nabla (^i\eta_N))$ & $|$ & $^i\eta_N$\\
				\end{tabular}
				\right )\right]^{-1}.
			\end{equation}
			Moreover $K|_{\{y_N=0\}}$ has the form
			\begin{equation}
				K|_{\{y_N=0\}}=\left[\left (
				\begin{tabular}{lll}
					$\cdots$ & $\partial_{y_1}\gamma$ & $\cdots$ \\
					$\cdots$ & $\partial_{y_2}\gamma$ & $\cdots$ \\
					$\cdots$ & $\cdots$ & $\cdots$\\
					$\cdots$ & $\quad ^i\eta$ & $\cdots$ \\
				\end{tabular}
				\right ) \left (
				\begin{tabular}{lll}
					$\nabla \gamma_1$ & $|$ & $^i\eta_1$\\
					$\nabla \gamma_2$ & $|$ & $^i\eta_2$\\
					$\vdots$ & $|$ & $\vdots$\\
					$\nabla \gamma_N$ & $|$ & $^i\eta_N$\\
				\end{tabular}
				\right )\right]^{-1}.
			\end{equation}
			Taking into account the orthogonality between the normal and the tangent hyperplane, $K|_{\{y_N=0\}}$ is
			\begin{equation}\label{kappaBordo}
				K|_{\{y_N=0\}}=\left(
				\begin{tabular}{llll}
					$ \ $ & $ \ $ & $ \ $ & $0$ \\
					$ \ $ & $C(D\gamma)$ & $ \ $ & $0$ \\
					$ \ $ & $ \ $ & $ \ $ & $0$\\
					$0$ & $0$ & $\cdots$ & $1$ \\
				\end{tabular}
				\right)^{-1}.
			\end{equation}

		\begin{remark}\label{remimp}
			Consider  problem \eqref{EP} under the  Neumann condition. Here $^i\eta=-\nu$. If $x\in\partial \Omega\cap B_r(\bar{x})$ is such that $|\nabla u(x)|\neq 0$, calling $y\in\{y_N=0\}$ the point for which $x=\Phi(y)$, it results
			$$
			\partial_\nu u(x)=\langle \nabla u(x),\nu\rangle=-\langle A(y)\nabla w(y), A(y)e_N\rangle=-\langle K(y)\nabla w(y), e_N\rangle=-w_{y_N}(y)
			$$
			by \eqref{kappaBordo},  therefore, $0=w_{y_N}(y):=w_N(y)$.\\
			On the contrary, considering the case of the  Dirichlet condition, we only know that on the boundary $\{y_N=0\}$, $\nabla{w}=(\bar{0}, w_N)$ and $w_N=\langle K(y)\nabla w, e_N\rangle$.
		\end{remark}

		\section{The transformed problem}
		
		Here we study the equation obtained by the flattening argument via Fermi coordinates.
		It is well known that such a procedure does not provide, in general, equations in divergence form. By the way, we show the existence of a nice weak formulation. We start with the following:
			\begin{remark}\label{remregcianchi}
				Note that, under our assumptions, we have that 
				\[
				u\in C^{1,\alpha}(\overline{\Omega})
				\]
				since the source term is bounded. 
			As recalled in the introduction, in our proofs we make also use of the results
			of Cianchi and Maz'ya. In particular 
			$$
			|\nabla u|^{p-2}\nabla u\in W^{1,2}(\Omega).
			$$
			under the necessary and sufficient condition $f\in L^2(\Omega)$, see 
			  Theorem 2.4 in \cite{CiMa}. Actually we shall also exploit the fact that 
			  \begin{equation}\label{W22}
			   u\in W^{2,2}(\hat\Omega)
			  \end{equation}
			  for any domain $\hat\Omega\subseteq \Omega$ such that 
			  \begin{equation}\label{apprcia}
			   |\nabla u|\geq\theta>0 \quad in \quad \hat\Omega.
			  \end{equation}
			  This is a consequence of  Theorem 2.4 in \cite{CiMa}. Actually, to deduce this fact, we need to observe that the approximating argument of \cite{CiMa}, under our assumptions, produces a sequence of functions uniformly bounded in 	$ C^{1,\alpha}(\overline{\Omega})$ (see \cite{LiebQ}) so that the condition in \eqref{apprcia} is preserved for the approximating net, redefining the constant. 
		\end{remark}
	From now on, in our computations, we shall always use the regularity information in Remark \ref{remregcianchi} without mentioning it each time. Let us now define an even extension for the operator $A(\cdot)$ (and as a rule for $K$) and for the mapping $\delta(\cdot)$ on $\Bflat$ as follows
		$$
		\tilde{A}(y)=\begin{cases} A(y), &y_N \geq 0\\
			A(y_1, y_2, \dots, y_{N-1}, -y_N) &y_N<0;
		\end{cases}
		$$
		
		$$
		\tilde{\delta}(y)=\begin{cases} \delta(y), &y_N \geq 0,\,\\
			\delta(y_1, y_2, \dots, y_{N-1}, -y_N) &y_N<0.\,\\
		\end{cases}
		$$
\begin{remark}\label{grad-}
In the following, for a given vector 
$\mathcal{V}=(\mathcal{V}_1,\ldots,\mathcal{V}_N)$, we will set $$\mathcal{V}^-:=(\mathcal{V}_1,\ldots, -\mathcal{V}_N).$$ In particular we set 
\[\nabla^-u:=(\nabla u)^-.\]
\end{remark}
For a given differentiable function $h(x)$ we will indicate by $$Z_h:=\{x: \nabla h=0\}.$$
Let us now introduce the extension of our map $w$. Let $u\in C^{1,\alpha}(\bar{\Omega})$ be a weak solution to \eqref{EP} under the  Dirichlet condition and $w$ the solution to \eqref{t1}. Let us define $\tilde{w}\in C^{1,\alpha}(\Bflat)$ by odd extension as
$$
\tilde{w}(y)=\begin{cases} 
	w(y), & y_N \geq 0\\
	-w(y_1, y_2, \dots, y_{N-1}, -y_N) &y_N<0.
\end{cases}
$$
In a similar way, let $u\in C^{1,\alpha}(\bar{\Omega})$ be a weak solution to \eqref{EP} under the Neumann condition and $w$ the solution to \eqref{t1}. Let us define 
$\tilde{z}\in C^{1,\alpha}(\Bflat)$ defined by even extension as
$$
\tilde{z}(y)=\begin{cases} 
	w(y), & y_N \geq 0\\
	w(y_1, y_2, \dots, y_{N-1}, -y_N) &y_N<0.
\end{cases}
$$

\begin{proposition}\label{1risul} Let $\Omega\subset \mathbb{R}^n$ satisfying $(H_\Omega)$,  $f\in W^{1,1}(\Omega)\cap L^q(\Omega)$ with $q>N$, $\Phi$ the flattening operator defined in \eqref{ft} and $\Bflat^+$ and $\Bflat^-$ as above.  Let $u\in C^{1,\alpha}(\bar{\Omega})$ be a weak solution to \eqref{EP} under Dirichlet or Neummann conditions. Then $\tilde{w}(y)$ and $\tilde{z}(y)$ fulfil 
 \begin{eqnarray}\label{Neqfin}\\\nonumber
					\int_{\Bflat^-}
					|\tilde{A}\nabla^- v|^{p-2}\langle (\tilde{K}\nabla^- v)^-,\nabla \psi\rangle \tilde{\delta}dx
					+\int_{\Bflat^+}
					|\tilde{A}\nabla v|^{p-2}\langle \tilde{K}\nabla v,\nabla \psi\rangle \tilde{\delta}
					dx=\int_{\Bflat}\tilde{g}\psi \tilde{\delta}dx,
 \end{eqnarray}
				for all $\psi\in C_c^\infty(\Bflat)$.
\end{proposition}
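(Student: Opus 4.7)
The plan is to reduce \eqref{Neqfin} to the weak formulation \eqref{t1} already established on $\Bflat^+$, by applying the reflection $R:(y_1,\dots,y_N)\mapsto(y_1,\dots,y_{N-1},-y_N)$, which maps $\Bflat^-$ bijectively onto $\Bflat^+$ with unit Jacobian, and then decomposing the test function into its even and odd parts across $\{y_N=0\}$.

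First, I would record the transformation rules under $R$. For $y\in\Bflat^-$ and $z:=R(y)\in\Bflat^+$, by construction $\tilde A(y)=A(z)$, $\tilde K(y)=K(z)$ and $\tilde\delta(y)=\delta(z)$. The chain rule applied to the odd and even extensions yields
\[
\nabla_y\tilde w(y) = -(\nabla w(z))^-,\qquad \nabla_y\tilde z(y) = (\nabla w(z))^-,
\]
so that, using $(V^-)^-=V$,
\[
\nabla^-\tilde w(y) = -\nabla w(z),\qquad \nabla^-\tilde z(y) = \nabla w(z).
\]
Setting $\hat\psi(z):=\psi(R(z))$, the same chain rule gives $\nabla_y\psi(y)=(\nabla_z\hat\psi(z))^-$. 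Combining these formulas with the elementary identity $\langle V^-,W^-\rangle=\langle V,W\rangle$, one checks pointwise that
\[
|\tilde A\nabla^- v|^{p-2}\,\langle(\tilde K\nabla^- v)^-,\nabla\psi\rangle\,\tilde\delta(y) = \pm\,|A\nabla w|^{p-2}\,\langle K\nabla w,\nabla\hat\psi\rangle\,\delta(z),
\]
with the sign $+$ in the Neumann case ($v=\tilde z$) and $-$ in the Dirichlet case ($v=\tilde w$).

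Next, extending the source to $\tilde g$ evenly for Neumann and oddly for Dirichlet (consistently with the parity of the extension of $w$) one gets $\int_{\Bflat^-}\tilde g\,\psi\,\tilde\delta\,dy = \pm\int_{\Bflat^+}g\,\hat\psi\,\delta\,dz$. Performing the change of variables $z=R(y)$ in the $\Bflat^-$-integrals then reduces \eqref{Neqfin} to
\[
\int_{\Bflat^+}|A\nabla w|^{p-2}\langle K\nabla w,\nabla\Psi\rangle\,\delta\,dz = \int_{\Bflat^+} g\,\Psi\,\delta\,dz,
\]
where $\Psi:=\psi+\hat\psi$ in the Neumann case and $\Psi:=\psi-\hat\psi$ in the Dirichlet case. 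In the Neumann case $\Psi\in C^{\infty}(\overline{\Bflat^+})$ and is immediately admissible in \eqref{t1}. In the Dirichlet case $\Psi\in C_c^\infty(\Bflat)$ vanishes identically on $\{y_N=0\}$ because there $\hat\psi\equiv\psi$; hence its restriction to $\Bflat^+$ lies in $W_0^{1,p}(\Bflat^+)$ and is admissible in \eqref{t1} by the usual density extension of the Dirichlet weak formulation from $C_c^\infty(\Omega)$ to $W_0^{1,p}(\Omega)$, transported through the Fermi chart $\Phi$.

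The main obstacle I expect is the sign-bookkeeping of the first step: one must carefully track how the odd or even extension affects $\nabla v$, $\nabla^-v$ and $(\tilde K\nabla^- v)^-$, and how the reflection independently transforms $\nabla\psi$. The somewhat unusual combination of $\nabla^-v$ and of the outer $(\cdot)^-$ in \eqref{Neqfin} is designed precisely so that after the change of variables all these flips recombine into a single overall sign $\pm$. Once this is verified, the conclusion follows from the standard decomposition $\psi=\tfrac{1}{2}(\psi+\hat\psi)+\tfrac{1}{2}(\psi-\hat\psi)$ and the admissibility of each piece as test function.
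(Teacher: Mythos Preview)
Your approach is correct and takes a genuinely different route from the paper. The paper argues as follows: it first invokes the Cianchi--Maz'ya regularity (Remark~\ref{remregcianchi}) to interpret the transformed equation pointwise a.e.\ on $\Bflat^+$ and $\Bflat^-$, then multiplies by a test function $\psi\in C_c^\infty(\Bflat\setminus Z_{\tilde w})$ and integrates by parts, producing boundary integrals on $S=\operatorname{supp}\psi\cap\{y_N=0\}$; these are shown to cancel case by case using the block structure of $K|_{\{y_N=0\}}$ coming from the Fermi chart together with the Dirichlet/Neumann information on $\nabla w|_S$; finally the restriction $\psi\in C_c^\infty(\Bflat\setminus Z_{\tilde w})$ is removed by a cutoff $H_\varepsilon(|\nabla w|)$, again appealing to the Cianchi--Maz'ya bound to pass to the limit.

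Your argument bypasses all of this: you stay entirely at the level of weak formulations, push the reflection through the $\Bflat^-$ integral, and land directly on \eqref{t1} with test function $\Psi=\psi\pm\hat\psi$. No second-order regularity, no boundary-term analysis, no $H_\varepsilon$ approximation are needed. The only nontrivial point is the admissibility of $\Psi$, and you handle it correctly: in the Dirichlet case $\Psi$ is smooth on $\overline{\Bflat^+}$ and vanishes on all of $\partial\Bflat^+$, hence lies in $W_0^{1,p}(\Bflat^+)$; in the Neumann case $\Psi$ vanishes near $\partial\Bflat^+\setminus\{y_N=0\}$ (because $\psi\in C_c^\infty(\Bflat)$), so its pullback extends by zero to a $W^{1,p}(\Omega)$ test function. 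Your closing remark about the decomposition $\psi=\tfrac12(\psi+\hat\psi)+\tfrac12(\psi-\hat\psi)$ is unnecessary: in each boundary condition only one combination $\Psi=\psi\pm\hat\psi$ arises, and the equivalence \eqref{Neqfin}\,$\Leftrightarrow$\,\eqref{t1} for that $\Psi$ already gives the result for every $\psi$. The trade-off is that the paper's route, while heavier here, rehearses exactly the boundary-term machinery (the structure of $K|_S$, the vanishing of the surface integrals) that is reused verbatim in the linearized estimate of Lemma~\ref{lem:comsup} and Theorem~\ref{stimad2}, whereas your approach is self-contained but does not preview those later computations.
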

			
			\begin{proof}
				Let us first consider the case of the  \emph{Dirichlet condition}.
				%
				Let us define the odd extension of  $g$ with respect to $y_N$ as follows
%
$$
				\tilde{g}(y)=\begin{cases} g(y) &y_N \geq 0\,\\
					-g(y_1, y_2, \dots, y_{N-1}, -y_N) &y_N<0\,\\
				\end{cases}
				$$
and let $\tilde w$ be defined as above.
					Let us consider for $y=(\bar{y},y_N)$ the new coordinate $x=(\bar{x},x_N)$ where $\bar{x}=\bar{y}$ and $x_N=-y_N$; equation  \eqref{t1}   becomes
					\begin{eqnarray}\label{t2}
						\int_{\Bflat^-}
						|\tilde{A}(x)\nabla^- \tilde{w}(x)|^{p-2}\langle \tilde{K}(x)\nabla^- \tilde{w}(x),\nabla^- \tilde{\phi}(x)\rangle\tilde{\delta}(x)
						dx
						=\int_{\Bflat^-}\tilde{g}(x)\tilde{\phi}(x)\tilde{\delta}(x)dx,
					\end{eqnarray}
					where 
					$\tilde{\phi}(x):=\phi(\bar{x},-x_N)\in C_c^\infty(\Bflat^-)$. Then taking into account the regularity results in \cite{CiMa},  \eqref{t1} and \eqref{t2} means that $\tilde{w}$ satisfies
					\begin{eqnarray}\label{NP1}
						-div(\tilde{\delta} |\tilde{A}\nabla \tilde{w}|^{p-2}\tilde{K}\nabla \tilde{w})=\tilde{g} \ \tilde{\delta} \quad \mbox{ a.e. on } \Bflat^+\ \\
						\nonumber  -div(\tilde{\delta}|\tilde{A}\nabla^- \tilde{w}|^{p-2}(\tilde{K}\nabla^- \tilde{w})^-)=\tilde{g} \ \tilde{\delta} \quad \mbox{ a.e. on } \Bflat^-,
					\end{eqnarray}
					respectively.
					Let $$\psi\in C_c^\infty(\Bflat\setminus Z_{\tilde{w}});$$ multiplying \eqref{NP1} for $\psi$ and integrating on the respective domain we get
					\begin{eqnarray}\label{Neq}
						\nonumber	&&\int_{\Bflat^-\cup \Bflat^+}\tilde{g}\tilde{\delta} \ \psi dx=\int_{\Bflat^-}
						|\tilde{A}\nabla^- \tilde{w}|^{p-2}\langle \tilde{K}\nabla^- \tilde{w},\nabla^- \psi\rangle \tilde{\delta} dx \\\nonumber&&-\int_S 
						|\tilde{A}\nabla^- \tilde{w}|^{p-2}\langle(\tilde{K}\nabla^- \tilde{w})^-,e_N\rangle \tilde{\delta}\psi \ ds+\int_{\Bflat^+}
						|\tilde{A}\nabla \tilde{w}|^{p-2}\langle \tilde{K}\nabla \tilde{w},\nabla \psi\rangle
						\tilde{\delta}dx\\&&- \int_S 
						|\tilde{A}\nabla \tilde{w}|^{p-2}\langle \tilde{K}\nabla \tilde{w},-e_N\rangle\tilde{\delta}\psi \ 
						ds,
					\end{eqnarray}
					where $S:=supp(\psi)\cap\{x_N=0\}$.
						Taking into account the Dirichlet condition and Remark \ref{remimp}, $\nabla \tilde{w}(\bar{x},0)=(\bar{0}, \tilde{w}_N(\bar{x},0))$ and $\nabla^- \tilde{w}(\bar{x},0))=(\bar{0}, -\tilde{w}_N(\bar{x},0))$;
						therefore 
						\begin{eqnarray}\label{bordodopo}
							\int_S |\tilde{A}\nabla^- \tilde{w}|^{p-2}\langle (\tilde{K}\nabla^- \tilde{w})^-,e_N\rangle \psi \tilde{\delta} ds&=&\int_S |\tilde{A}\nabla \tilde{w}|^{p-2}\tilde{w}_N \psi \tilde{\delta} ds,\\
							\nonumber \int_S |\tilde{A}\nabla \tilde{w}|^{p-2}\langle \tilde{K}\nabla \tilde{w},-e_N\rangle\psi \tilde{\delta}  ds
							&=&-\int_S |\tilde{A}\nabla \tilde{w}|^{p-2}\tilde{w}_N \psi \tilde{\delta} ds.
						\end{eqnarray}
						Hence \eqref{Neq} states that $\tilde{w}$ satisfies
						\begin{eqnarray}\label{Neqfinu}
							\int_{\Bflat^-\cup \Bflat^+}\tilde{g}\tilde{\delta}\psi dx=\int_{\Bflat^-}
							|\tilde{A}\nabla^- \tilde{w}|^{p-2}\langle \tilde{K}\nabla^- \tilde{w},\nabla^- \psi\rangle \tilde{\delta}dx\\
							\nonumber	+\int_{\Bflat^+}
							|\tilde{A}\nabla \tilde{w}|^{p-2}\langle \tilde{K}\nabla \tilde{w},\nabla \psi\rangle
							\tilde{\delta}dx,
						\end{eqnarray}
						for all $\psi\in C_c^\infty(\Bflat\setminus Z_{\tilde{w}})$.\\
Let us now consider the case of the \emph{Neumann condition}. Let us define the even extensions with respect to $y_N$ of  $g$ as follows 
						$$
						\tilde{g}(y)=\begin{cases} g(y) &y_N \geq 0\,\\
							g(y_1, y_2, \dots, y_{N-1}, -y_N)  &y_N<0,\,\\
						\end{cases}
						$$
and let $\tilde{z}$ be defined as above.			Arguing as for the Dirichlet condition, for $\psi\in C_c^\infty(\Bflat\setminus Z_{\tilde{w}})$, from \eqref{NP1} we get \eqref{Neq}:
						\begin{eqnarray*}
							\nonumber	&&\int_{\Bflat^-\cup \Bflat^+}\tilde{g}\tilde{\delta} \ \psi dx=\int_{\Bflat^-}
							|\tilde{A}\nabla^- \tilde{z}|^{p-2}\langle \tilde{K}\nabla^- \tilde{z},\nabla^- \psi\rangle \tilde{\delta} dx \\
							&&-\int_S
							|\tilde{A}\nabla^- \tilde{z}|^{p-2}\langle (\tilde{K}\nabla^- \tilde{z})^-,e_N\rangle \psi \tilde{\delta}ds+\int_{\Bflat^+}
							|\tilde{A}\nabla \tilde{z}|^{p-2}\langle \tilde{K}\nabla \tilde{z},\nabla \psi\rangle
							\tilde{\delta}dx\\
							&&- \int_S
							|\tilde{A}\nabla \tilde{z}|^{p-2}\langle \tilde{K}\nabla \tilde{z},-e_N\rangle\psi \tilde{\delta}
							ds,
						\end{eqnarray*}
						where $S:=supp(\psi)\cap\{x_N=0\}$. Note that, in this case, by  Remark \ref{remimp},
						\begin{eqnarray}\label{bordodopo2}
							\int_S \psi \tilde{\delta}|\tilde{A}\nabla \tilde{z}|^{p-2}\langle \tilde{K}\nabla \tilde{z},-e_N\rangle \ ds
							=0.				\end{eqnarray}
Furthermore by  Remark \ref{remimp} and Remark \ref{grad-} we obtain\begin{equation*}
\int_S \psi						|\tilde{A}\nabla^- \tilde{z}|^{p-2}\langle (\tilde{K}\nabla^- \tilde{z})^-,e_N\rangle \tilde{\delta}ds=0.
\end{equation*}
						To conclude, let us prove that \eqref{Neqfinu} holds for all $\psi \in C_c^\infty(\Bflat)$. For a given $\eta\in C_c^\infty(\Bflat)$ and defining 
						$$\displaystyle H_\epsilon(t):=\chi_{(2\epsilon,+\infty)}+\left(\frac{t}{\epsilon}- 1\right) \chi_{(\epsilon, 2\epsilon)},$$ let us consider 
						\[
							\psi:=\eta H_{\epsilon}(|\nabla w|)\in W_c^{1,2}(\Bflat)\,.
						\]
					 Note that $supp(\psi)\subset \Bflat\setminus Z_{\tilde{w}}$ and,
						by a standard density arguments,  we can  use it as test functions in \eqref{Neqfinu}. Then
						\begin{eqnarray*}
							\int_{\Bflat^-\cup \Bflat^+}\tilde{g}\eta H_{\epsilon}(|\nabla \tilde{w}|)\tilde{\delta}dx=\int_{\Bflat^-}
							|\tilde{A}\nabla^- \tilde{w}|^{p-2}\langle \tilde{K}\nabla^- \tilde{w},\nabla^- \eta\rangle H_{\epsilon}(|\nabla \tilde{w}|) \tilde{\delta}dx\\
							\int_{\Bflat^-} |\tilde{A}\nabla^- \tilde{w}|^{p-2}\langle \tilde{K}\nabla^- \tilde{w},\nabla^-(|\nabla\tilde{w}|)\rangle H'_{\epsilon}(|\nabla \tilde{w}|)\eta \tilde{\delta}dx\\
							\nonumber\int_{\Bflat^+}|\tilde{A}\nabla \tilde{w}|^{p-2}\langle \tilde{K}\nabla \tilde{w},\nabla \eta\rangle H_{\epsilon}(|\nabla \tilde{w}|) \tilde{\delta}dx\\
							\int_{\Bflat^+}|\tilde{A}\nabla \tilde{w}|^{p-2}\langle \tilde{K}\nabla \tilde{w},\nabla(|\nabla \tilde{w}|)\rangle H'_{\epsilon}(|\nabla \tilde{w}|)\eta \tilde{\delta}dx.			\end{eqnarray*}
						Taking into account that				\begin{eqnarray*}							&&\left|\int_{\Bflat^+}|\tilde{A}\nabla \tilde{w}|^{p-2}\langle \tilde{K}\nabla \tilde{w},\nabla(|\nabla \tilde{w}|)\rangle H'_{\epsilon}(|\nabla \tilde{w}|)\eta \tilde{\delta}dx\right| \ \\
							&&\leq \tilde{C} \int_{\Bflat^+}|\nabla\tilde{w}|^{p-2}|\nabla \tilde{w}| |D^2 \tilde{w}| \frac{1}{|\nabla \tilde{w}|}\chi_{(\epsilon,2\epsilon)}\eta \tilde{\delta}dx\\
							&&\displaystyle =\tilde{C}\int_{\Bflat^+}|\nabla\tilde{w}|^{p-2}|D^2 \tilde{w}|\chi_{(\epsilon,2\epsilon)}\eta \tilde{\delta}dx\stackrel{\epsilon\to 0}{\longrightarrow}0,				\end{eqnarray*}
by  \cite{CiMa} and by Lebesgue dominated Theorem.  The second integral (on $\Bflat^-$) can be treated is a similar way.  
Then, if $\epsilon \to 0$, the previous says that
						\begin{eqnarray}
							\int_{\Bflat^-\cup \Bflat^+}\tilde{g}\eta \tilde{\delta}dx=\int_{\Bflat^-}
							|\tilde{A}\nabla^- \tilde{w}|^{p-2}\langle (\tilde{K}\nabla^- \tilde{w})^-,\nabla\eta\rangle \tilde{\delta}dx\\
							\nonumber	+\int_{\Bflat^+}
							|\tilde{A}\nabla \tilde{w}|^{p-2}\langle \tilde{K}\nabla \tilde{w},\nabla \eta\rangle
							\tilde{\delta}dx,
						\end{eqnarray}
						for all $\eta\in C_c^\infty(\Bflat)$.
					\end{proof}
					
					\section{Computing the Linearized Equation}
					With a little abuse of notation,
					even if we are considering the extended functions defined on the whole $\Bflat$,
					we will omit the tilde over the functions defined in \eqref{Neqfin}. Moreover 
					by $T^\mp(z)$ we denote the traces of  the functions $z_{|_{\Bflat^\mp}}$ on the boundary $\partial S$ of the domains $\Bflat^-$ and  $\Bflat^+$ respectively.
					Finally, in order to make readable the notation, when it is possible, we redefine the constant calling them with the same name. \\
					\begin{lemma}\label{lem:comsup} Let $\Omega\subset \mathbb{R}^n$ satisfying $(H_\Omega)$,  $f\in W^{1,1}(\Omega)\cap L^q(\Omega)$ with $q>N$, $\Phi$ be the flattening operator \eqref{ft} and $\Bflat^+$ and $\Bflat^-$ as in \eqref{Bflat}.  Let $u\in C^{1,\alpha}(\bar{\Omega})$ be a weak solution of \eqref{EP}  with respect to the Dirichlet condition or the Neumann condition and $\tilde{w}$ satisfying \eqref{Neqfin}. Then for all $j=1,\dots,N$,  $\tilde{w}_j$ fulfils
\begin{eqnarray}
\nonumber &&\ib |A\nabla \tilde{w}|^{p-2}
							\langle K\nabla \tilde{w}_j,\nabla\phi\rangle \tilde{\delta}dx+\ibm |A\nabla^- \tilde{w}|^{p-2}
							\langle K\nabla^- \tilde{w}_j,\nabla^-\phi\rangle \tilde{\delta}dx\\
							\nonumber &+&\ib |A\nabla \tilde{w}|^{p-2}
							\langle K_j\nabla \tilde{w},\nabla\phi\rangle \tilde{\delta} dx +\ibm |A\nabla^- \tilde{w}|^{p-2}
							\langle K_j\nabla^- \tilde{w},\nabla^-\phi\rangle \tilde{\delta}dx\\
							\nonumber &+&
							(p-2)\ib|A\nabla \tilde{w}|^{p-4}
							\langle K\nabla \tilde{w},\nabla \tilde{w}_j\rangle
							\cdot
							\langle K\nabla \tilde{w},\nabla\phi\rangle \tilde{\delta}dx\\
							\nonumber &+&
							(p-2)\ibm|A\nabla^- \tilde{w}|^{p-4}
							\langle K\nabla^- \tilde{w},\nabla^- \tilde{w}_j\rangle
							\cdot
							\langle K\nabla^- \tilde{w},\nabla^-\phi\rangle \tilde{\delta}dx\\	
							&+&
							(p-2)\ib|A\nabla \tilde{w}|^{p-4}
							\langle A_j^T A\nabla \tilde{w},\nabla \tilde{w}\rangle
							\cdot
							\langle K\nabla \tilde{w},\nabla\phi\rangle \tilde{\delta}dx\\
							\nonumber &+&
							(p-2)\ibm|A\nabla^- \tilde{w}|^{p-4}
							\langle A_j^T A\nabla^- \tilde{w},\nabla^- \tilde{w}\rangle
							\cdot
							\langle K\nabla^- \tilde{w},\nabla^-\phi\rangle \tilde{\delta}dx\\
							\nonumber &+&	\ib |A\nabla \tilde{w}|^{p-2}
							\langle K\nabla \tilde{w},\nabla\phi\rangle \tilde{\delta}_jdx+\ibm |A\nabla^- \tilde{w}|^{p-2}
							\langle K\nabla^- \tilde{w},\nabla^-\phi\rangle \tilde{\delta}_jdx\\
							\nonumber &=&\int_{\Bflat^-\cup \Bflat^+} (g_j\tilde{\delta}+g\tilde{\delta}_j)\phi  \ dx\, \mp\delta_{jN}\int_{S}T^{\mp}(\tilde{g}\tilde{\delta}\phi) dH^{n-1},
				\end{eqnarray}
				where $\delta_{jN}$  denotes the Kronecker delta, and  $\phi\in C_c^\infty(\Bflat\setminus Z_{\tilde{w}})$.	
					\end{lemma}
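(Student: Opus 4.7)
The plan is to differentiate the weak formulation \eqref{Neqfin} in the $y_j$-direction. Given any $\phi\in C_c^\infty(\Bflat\setminus Z_{\tilde w})$, the partial derivative $\phi_{y_j}$ is itself an admissible test function in \eqref{Neqfin}; I substitute it and then transfer the extra $y_j$-derivative off $\phi$ by integration by parts on $\Bflat^+$ and $\Bflat^-$ separately. To justify this, observe that on $\mathrm{supp}\,\phi$ one has $|\nabla\tilde w|\geq\theta>0$, so by Remark \ref{remregcianchi} the extended solution $\tilde w$ lies in $W^{2,2}(\mathrm{supp}\,\phi\cap\Bflat^\pm)$. Since $\tilde A,\tilde K,\tilde\delta$ are $C^2$ by the $C^3$-regularity of $\gamma$, and $t\mapsto t^{p-2}$ is smooth away from zero, the Sobolev chain rule yields that the coefficient vector fields $H^i:=\tilde\delta|\tilde A\nabla\tilde w|^{p-2}(\tilde K\nabla\tilde w)^i$ on $\Bflat^+$ and $\tilde H^i:=\tilde\delta|\tilde A\nabla^-\tilde w|^{p-2}(\tilde K\nabla^-\tilde w)^{-i}$ on $\Bflat^-$ belong to $W^{1,2}$ on the support of $\phi$, exactly the regularity needed for the integration by parts.

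The bulk of the Lemma comes from expanding $\partial_{y_j}H^i$ and $\partial_{y_j}\tilde H^i$ via the product and chain rules. Differentiating the three factors of $H^i$ yields, in order: a term with $\tilde\delta_j$ in place of $\tilde\delta$ (from $\partial_{y_j}\tilde\delta$); a term with $(K_j\nabla\tilde w+K\nabla\tilde w_j)^i$ in place of $(\tilde K\nabla\tilde w)^i$ (from $\partial_{y_j}(\tilde K\nabla\tilde w)$); and, from the chain rule applied to $|\tilde A\nabla\tilde w|^{p-2}$, a factor $(p-2)|\tilde A\nabla\tilde w|^{p-4}\langle A\nabla\tilde w,\partial_{y_j}(A\nabla\tilde w)\rangle$. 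The inner product in this last factor splits as $\langle K\nabla\tilde w,\nabla\tilde w_j\rangle+\langle A_j^T A\nabla\tilde w,\nabla\tilde w\rangle$ after writing $\partial_{y_j}(A\nabla\tilde w)=A_j\nabla\tilde w+A\nabla\tilde w_j$, producing the two distinct $(p-2)$-weighted bulk terms in the statement. The reflected analogs on $\Bflat^-$, with $\nabla$ replaced by $\nabla^-$, yield the paired contributions.

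The most delicate step is the treatment of the boundary contributions generated at $\{y_N=0\}$. Because $\phi$ has compact support in the tangential directions, these appear only when $j=N$, and the outward normals to $\Bflat^\pm$ there are $\mp e_N$, explaining the signs $\mp\delta_{jN}$. The key point is that the LHS boundary traces cancel across the two pieces in both the Dirichlet and Neumann settings: by the block structure \eqref{kappaBordo} the matrix $K|_{\{y_N=0\}}$ satisfies $K^{iN}|_{\{y_N=0\}}=0$ for $i<N$ and $K^{NN}|_{\{y_N=0\}}=1$, so using the boundary gradient information from Remark \ref{remimp} (Dirichlet: $\nabla\tilde w=(0,\ldots,0,\tilde w_N)$; Neumann: $\tilde w_N=0$), one checks that $T^+(H^i\partial_{y_i}\phi)=T^-(\tilde H^i\partial_{y_i}\phi)$ pointwise on $\{y_N=0\}$, so the two contributions cancel with their opposite normals. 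The only surviving boundary term comes from integrating $\int_{\Bflat}\tilde g\tilde\delta\,\phi_{y_N}\,dy$ by parts in $y_N$, producing exactly the surface term $\mp\delta_{jN}\int_S T^\mp(\tilde g\tilde\delta\phi)\,dH^{N-1}$ on the right-hand side. The main technical obstacle is precisely this unified LHS boundary cancellation across both boundary conditions, which rests on \eqref{kappaBordo} together with Remark \ref{remimp}; everything else is a careful but routine application of the chain and product rules.
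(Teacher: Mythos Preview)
Your proposal is correct and follows essentially the same route as the paper: test \eqref{Neqfin} with $\phi_{y_j}$, integrate by parts in $y_j$ separately on $\Bflat^\pm$ using the $W^{2,2}$-regularity from Remark~\ref{remregcianchi} on $\mathrm{supp}\,\phi$, expand $\partial_{y_j}$ of the coefficient fields via the product/chain rule, and then show the LHS surface contributions on $S=\{y_N=0\}$ (which arise only for $j=N$ since the outward normal there is $\mp e_N$) cancel across $\Bflat^+$ and $\Bflat^-$ via \eqref{kappaBordo} and Remark~\ref{remimp}, while the RHS surface term survives. The paper carries out the Dirichlet and Neumann boundary cancellations by separate explicit computations, whereas you summarize them in the single identity $T^+(H^i\partial_{y_i}\phi)=T^-(\tilde H^i\partial_{y_i}\phi)$ on $S$; both amount to the same verification.
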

\begin{remark}
We point out that Lemma \ref{lem:comsup}  actually holds for test functions $\phi\in C_c^\infty(\Bflat)$. This can be proved a posteriori via a density argument once \eqref{eq stima hessiano locale} is available. 
\end{remark}

				\begin{proof}
						Let us consider
						$\phi\in C^\infty_c(\Bflat\setminus Z_w)$. For any $j=1,...,N-1,$  using $\phi_j$ as test function
						in \eqref{Neqfin}.  Since $f$ in \eqref{EP} is in $W^{1,1}(\Omega)\cap L^q(\Omega)$ with $q>N$  and the flatting operator is $C^3$, we can integrate by parts
						obtaining that
						\begin{eqnarray}\label{linearizzato}
							\nonumber &&\ib |A\nabla \tilde{w}|^{p-2}
							\langle K\nabla \tilde{w}_j,\nabla\phi\rangle \tilde{\delta}dx+\ibm |A\nabla^- \tilde{w}|^{p-2}
							\langle K\nabla^- \tilde{w}_j,\nabla^-\phi\rangle \tilde{\delta}dx\\
							\nonumber &+&\ib |A\nabla \tilde{w}|^{p-2}
							\langle K_j\nabla \tilde{w},\nabla\phi\rangle \tilde{\delta} dx +\ibm |A\nabla^- \tilde{w}|^{p-2}
							\langle K_j\nabla^- \tilde{w},\nabla^-\phi\rangle \tilde{\delta}dx\\
							\nonumber &+&
							(p-2)\ib|A\nabla \tilde{w}|^{p-4}
							\langle K\nabla \tilde{w},\nabla \tilde{w}_j\rangle
							\cdot
							\langle K\nabla \tilde{w},\nabla\phi\rangle \tilde{\delta}dx\\
							\nonumber &+&
							(p-2)\ibm|A\nabla^- \tilde{w}|^{p-4}
							\langle K\nabla^- \tilde{w},\nabla^- \tilde{w}_j\rangle
							\cdot
							\langle K\nabla^- \tilde{w},\nabla^-\phi\rangle \tilde{\delta}dx\\
							&+&
							(p-2)\ib|A\nabla \tilde{w}|^{p-4}
							\langle A_j^T A\nabla \tilde{w},\nabla \tilde{w}\rangle
							\cdot
							\langle K\nabla \tilde{w},\nabla\phi\rangle \tilde{\delta}dx\\
							\nonumber &+&
							(p-2)\ibm|A\nabla^- \tilde{w}|^{p-4}
							\langle A_j^T A\nabla^- \tilde{w},\nabla^- \tilde{w}\rangle
							\cdot
							\langle K\nabla^- \tilde{w},\nabla^-\phi\rangle \tilde{\delta}dx\\
							\nonumber &+&	\ib |A\nabla \tilde{w}|^{p-2}
							\langle K\nabla \tilde{w},\nabla\phi\rangle \tilde{\delta}_jdx+\ibm |A\nabla^- \tilde{w}|^{p-2}
							\langle K\nabla^- \tilde{w},\nabla^-\phi\rangle \tilde{\delta}_jdx\\
							\nonumber &=&\int_{\Bflat^-\cup \Bflat^+} (g_j\tilde{\delta}+g\tilde{\delta}_j)\phi  \ dx\,.
						\end{eqnarray}
						For $j=N$ let us consider
						\begin{eqnarray}
							\int_{\Bflat^-\cup \Bflat^+}\tilde{g}\tilde{\delta}\psi_N dx=\int_{\Bflat^-}
							|\tilde{A}\nabla^- \tilde{w}|^{p-2}\langle (\tilde{K}\nabla^- \tilde{w})^-,\nabla \psi_N\rangle \tilde{\delta}dx\\
							\nonumber	+\int_{\Bflat^+}
							|\tilde{A}\nabla \tilde{w}|^{p-2}\langle \tilde{K}\nabla \tilde{w},\nabla \psi_N\rangle
							\tilde{\delta}dx\,.
						\end{eqnarray}
						Integrating by parts we notice that
						\begin{eqnarray*}
							&&\int_{\Bflat^-}
							|\tilde{A}\nabla^- \tilde{w}|^{p-2}\langle (\tilde{K}\nabla^- \tilde{w})^-,\nabla \psi_N\rangle \tilde{\delta}dx+\int_{\Bflat^+}
							|\tilde{A}\nabla \tilde{w}|^{p-2}\langle \tilde{K}\nabla \tilde{w},\nabla \psi_N\rangle \tilde{\delta}dx\\
							&=&-\int_{\Bflat^-}\langle \partial_N(\tilde{\delta}|\tilde{A}\nabla^- \tilde{w}|^{p-2}(\tilde{K}\nabla^- \tilde{w})^-),\nabla \psi\rangle dx-\int_{\Bflat^+}
							\langle\partial_N(\tilde{\delta}|\tilde{A}\nabla \tilde{w}|^{p-2} \tilde{K}\nabla \tilde{w}),\nabla \psi\rangle dx\\
							&&+\int_S|\tilde{A}\nabla^- \tilde{w}|^{p-2} \langle(\tilde{K}\nabla^- \tilde{w})^-,\nabla \psi\rangle\tilde{\delta} ds-\int_S|\tilde{A}\nabla \tilde{w}|^{p-2} \langle\tilde{K}\nabla \tilde{w},\nabla \psi\rangle\tilde{\delta} ds.
						\end{eqnarray*}
						Once more we  distinguish the case of the Dirichlet condition and the case of the Neumann condition. \\ 
						\emph{The Dirichlet condition.} By the arguments of Section \ref{sectionfermi}   and Remark \ref{remimp}, since $\nabla \tilde{w}(\bar{x},0)=(\bar{0}, \tilde{w}_N(\bar{x},0))$, $\nabla^- \tilde{w}(\bar{x},0))=(\bar{0}, -\tilde{w}_N(\bar{x},0))$ and $K|_S$ is the matrix 
						\begin{equation}
							\left(
							\begin{tabular}{llll}
								$ \ $ & $ \ $ & $ \ $ & $0$ \\
								$ \ $ & $C^{-1}(D\gamma)$ & $ \ $ & $0$ \\
								$ \ $ & $ \ $ & $ \ $ & $0$\\
								$0$ & $0$ & $\cdots \ 0$ & $1$ \\
							\end{tabular}
							\right),
						\end{equation}
						then $|\tilde{A}\nabla^- \tilde{w}|^{p-2}=|\tilde{A}\nabla \tilde{w}|^{p-2}$ outside the critical set  and the above integral on the boundary becomes
						\begin{eqnarray*}
							&& \int_S|\tilde{A}\nabla^- \tilde{w}|^{p-2} \langle(\tilde{K}\nabla^- \tilde{w})),\nabla^- \psi\rangle\tilde{\delta} ds-\int_S|\tilde{A}\nabla \tilde{w}|^{p-2} \langle\tilde{K}\nabla \tilde{w},\nabla \psi\rangle\tilde{\delta} ds\\
							&=&  \int_S|\tilde{A}\nabla \tilde{w}|^{p-2} w_{N}\psi_{N}\tilde{\delta} ds-\int_S|\tilde{A}\nabla \tilde{w}|^{p-2} w_{N}\psi_{N}\tilde{\delta} ds=0\,.
						\end{eqnarray*}\ \\
						\emph{The Neumann condition.} In this case  $\tilde{A}\nabla \tilde{w}=\tilde{A}\nabla^- \tilde{w}$; moreover by Remark \ref{remimp}, $\langle \tilde{K}\nabla^\pm \tilde{w},e_N\rangle=0$ on $S$ hence $\tilde{K}\nabla \tilde{w}=(\tilde{K}\nabla^- \tilde{w})^-$. Then 
						$$
						\int_S|\tilde{A}\nabla^- \tilde{w}|^{p-2} \langle(\tilde{K}\nabla^- \tilde{w})^-,\nabla \psi\rangle ds-\int_S|\tilde{A}\nabla \tilde{w}|^{p-2} \langle\tilde{K}\nabla \tilde{w}),\nabla \psi\rangle ds=0,
						$$
						too.\\
						To conclude let us note that we have
						\begin{eqnarray*}
							\int_{\Bflat^-\cup \Bflat^+}\tilde{g}\tilde{\delta}\psi_N dx&=&-\int_{\Bflat^-}\partial_N(\tilde{g}\tilde{\delta})\psi dx+\int_{S}T^-(\tilde{g}\tilde{\delta}\psi) ds\\
							&&-\int_{\Bflat^+}\partial_N(\tilde{g}\tilde{\delta})\psi dx-\int_{S}T^+(\tilde{g}\tilde{\delta}\psi) ds\\
							&=&-\int_{\Bflat^-\cup \Bflat^+}(g_N\tilde{\delta}+g\tilde{\delta}_N)\psi dx \pm\int_{S}T^{\mp}(\tilde{g}\tilde{\delta}\psi) dH^{n-1},
						\end{eqnarray*}
						where by $T^\mp$ we denote the traces of  the functions ${\tilde{g}\tilde{\delta}\psi}_{|_{\Bflat^\mp}}$ on the boundary $\partial S$ of the domains $\Bflat^-$ and  $\Bflat^+$ respectively.
Hence we get that			\begin{eqnarray*}
						&&	\int_{\Bflat^-\cup \Bflat^+}(g_N\tilde{\delta}+g\tilde{\delta}_N)\psi dx\mp\int_{S}T^{\mp}(\tilde{g}\tilde{\delta}\psi) dH^{n-1} 
\\&&=\int_{\Bflat^-}\langle \partial_N(|\tilde{A}\nabla^- \tilde{w}|^{p-2}(\tilde{K}\nabla^- \tilde{w})^-),\nabla^- \psi\rangle dx+\int_{\Bflat^+}
							\langle\partial_N(|\tilde{A}\nabla \tilde{w}|^{p-2} \tilde{K}\nabla \tilde{w}),\nabla \psi\rangle dx.
						\end{eqnarray*}
					\end{proof}
					
					\section{Second order estimates}
Having now at hand the linearized type equation, we are in position to deduce our second order estimates.
We have the following:
					\begin{theorem}\label{stimad2}
Let $\Omega\subset \mathbb{R}^n$ be a domain satisfying $(H_\Omega)$ and let  $f\in W^{1,1}(\Omega)\cap L^q(\Omega)$ with $q>N$.					Let ${w}\in C^{1,\alpha}(\Bflat)$ be the solution of
						\eqref{Neqfin} and let $p\in (1,\infty)$.
						For $x_0\in \Bflat$, let $r>0$ be such that $B_{2r}(x_0)\subset \Bflat$,
						there holds:
						\begin{equation}\label{eq stima hessiano locale}
							\int_{B_r(x_0)\setminus Z_w}|\nabla w|^{p-2-\beta} | D^2 w |^2dx \leq C\,,
						\end{equation}
where $C=C(x_0,r,p,N, \beta, \|w\|_{L^\infty(B_{2r})})$
 and $\beta<1$.  Consequently, the fact that the transformation is $C^2$-smooth, provides that, for some positive $\rho$:
							\begin{equation}\label{eq stima hessiano localeTTT}
							\int_{\left(B_\rho(p_0)\cap\Omega\right)\setminus Z_u}|\nabla u|^{p-2-\beta} | D^2 u |^2dx \leq C\,,
						\end{equation}
						for any $p_0\in\overline\Omega$.

					\end{theorem}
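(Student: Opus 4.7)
The plan is to apply the linearized identity from Lemma \ref{lem:comsup} for each $j=1,\dots,N$ with a test function of Damascelli--Sciunzi type
\[
\phi_j := w_j\, G_\varepsilon(|\nabla w|)\, \eta^2,
\]
and sum the resulting identities over $j$. Here $\eta\in C^\infty_c(B_{2r}(x_0))$ is a standard cutoff with $\eta\equiv 1$ on $B_r(x_0)$, and $G_\varepsilon$ is a smooth nonincreasing regularization of $t^{-\beta}$ that vanishes for $t\leq\varepsilon$, so that $\phi_j$ is compactly supported in $\Bflat\setminus Z_w$ and hence admissible in Lemma \ref{lem:comsup}. The strategy is the by-now classical one of \cite{DS}, transplanted onto the weighted-coefficient form of the transformed equation \eqref{Neqfin}.

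After summing over $j$, the leading positive contribution on the left-hand side comes from the first two groups of terms in Lemma \ref{lem:comsup},
\[
\int |A\nabla w|^{p-2}\langle K\nabla w_j,\nabla w_j\rangle\, G_\varepsilon\,\eta^2\,\tilde\delta\,dx \,+\, (p-2)\int |A\nabla w|^{p-4}\langle K\nabla w,\nabla w_j\rangle^2 G_\varepsilon\,\eta^2\,\tilde\delta\,dx,
\]
which, by the uniform ellipticity of $A,K$ (Remark \ref{rem2}) together with a standard convexity bound valid for all $p>1$, is bounded below by $c\int_{B_r\cap\{|\nabla w|>\varepsilon\}}|\nabla w|^{p-2-\beta}|D^2 w|^2\,dx$. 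The remaining terms split into three families. First, the cutoff-type terms generated by $\nabla\eta$ carry only a single factor of $|D^2 w|$ and are reabsorbed by Young's inequality with an arbitrarily small constant. Second, the coefficient terms involving $K_j,A_j,\tilde\delta_j$---all $C^1$ thanks to $\Omega\in C^3$---are handled in the same way, using also the a priori $C^{1,\alpha}(\overline\Omega)$ bound on $w$ and the restriction $\beta<1$. Third, the source side $\int(g_j\tilde\delta+g\tilde\delta_j)\phi_j\,dx$ is controlled by $(\|f\|_{W^{1,1}}+\|f\|_{L^q})\|\phi_j\|_\infty$, and $\|\phi_j\|_\infty<\infty$ precisely because $\beta<1$ forces the weight $|\nabla w|^{1-\beta}$ to remain bounded.

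The genuinely new ingredient, compared with the purely interior setting, is the boundary contribution $\mp\delta_{jN}\int_S T^\mp(\tilde g\tilde\delta\phi_j)\,dH^{n-1}$, which is present only for $j=N$ and only when $B_{2r}(x_0)$ crosses $S=\{y_N=0\}$. In the Neumann case the even extension of $g$ forces $T^+=T^-$, so the two boundary contributions coming from the $\Bflat^\pm$ integrals cancel. In the Dirichlet case the odd extension yields $T^+-T^-=2\,\operatorname{tr}_S g$, and what remains is bounded by $\|\operatorname{tr} f\|_{L^1(\partial\Omega)}\|\phi_N\|_\infty$, finite by the $W^{1,1}(\Omega)\hookrightarrow L^1(\partial\Omega)$ trace embedding. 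Passing $\varepsilon\to 0$ by Fatou on the left and dominated convergence on the right (again enabled by $\beta<1$) produces \eqref{eq stima hessiano locale}. For \eqref{eq stima hessiano localeTTT} one distinguishes two cases: for $p_0$ at positive distance from $\partial\Omega$ it is the classical interior estimate directly applied to $u$; for $p_0\in\partial\Omega$ one applies the bound just proved to $x_0=\Phi^{-1}(p_0)\in\{y_N=0\}$ and pulls back via the $C^2$-diffeomorphism $\Phi$, using $|\nabla u|\approx |\nabla w|$ (Remark \ref{rem2}) and $|D^2 u|\leq C(|D^2 w|+|\nabla w|)$, together with the bounded Jacobian $|J_{\Phi^{-1}}|$.

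The main obstacle I anticipate is the algebraic bookkeeping of the eight mixed terms in Lemma \ref{lem:comsup}, combined with the need to verify that the convexity-based lower bound for the principal part survives the replacement of the Euclidean pairing by the $A$- and $K$-weighted pairings and remains uniform in $p\in(1,\infty)$; this is a known calculation in the $p$-Laplace regularity literature, but must be tracked carefully enough to coexist with both the limit $\varepsilon\to 0$ and the constraint $\beta<1$.
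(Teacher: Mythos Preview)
Your test function differs from the paper's in a way that creates a genuine gap. You take $\phi_j=w_j\,G_\varepsilon(|\nabla w|)\,\eta^2$ with $G_\varepsilon(t)\approx t^{-\beta}$, and then list only three families of ``remaining terms'': cutoff terms from $\nabla\eta$, coefficient terms from $K_j,A_j,\tilde\delta_j$, and source terms. But $\nabla\phi_j$ also contains the piece $w_j\,G_\varepsilon'(|\nabla w|)\,\nabla|\nabla w|\,\eta^2$, and when this is fed into the two principal integrals of Lemma~\ref{lem:comsup} and summed over $j$ it produces
\[
\int |A\nabla w|^{p-2}\,|\nabla w|\,\langle K\nabla|\nabla w|,\nabla|\nabla w|\rangle\,G_\varepsilon'\,\eta^2\tilde\delta
\;+\;(p-2)\int |A\nabla w|^{p-4}\,|\nabla w|\,\langle K\nabla w,\nabla|\nabla w|\rangle^2\,G_\varepsilon'\,\eta^2\tilde\delta.
\]
Since $|\nabla w|\,G_\varepsilon'(|\nabla w|)\approx -\beta\,G_\varepsilon(|\nabla w|)$, these contributions are of \emph{exactly} the same order as your claimed leading positive term, with a coefficient proportional to $\beta$ (not to any small Young parameter). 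Whether the net quantity stays coercive for the full range $\beta<1$, $p>1$ reduces to a pointwise algebraic inequality relating $|D^2w|^2$, $|D^2w\cdot\nabla w|^2$ and $\langle D^2w\cdot\nabla w,\nabla w\rangle^2$; even when $A=K=I$ one is led, for $1<p<2$, to expressions like $(p-1-\beta)|D^2w|^2$ in unfavorable configurations, and with variable $A,K$ the ellipticity ratios $C_A/c_A$, $C_K/c_K$ further obstruct absorption. You have not addressed this term at all.

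The paper sidesteps the issue by choosing, for each fixed $j$, the test function $\phi=T_\varepsilon(w_j)\varphi^2$ with $T_\varepsilon(t)=G_\varepsilon(t)/|t|^\beta\approx t|t|^{-\beta}$. Then $\nabla\phi=T_\varepsilon'(w_j)\nabla w_j\,\varphi^2+\ldots$ with $T_\varepsilon'\approx(1-\beta)|w_j|^{-\beta}>0$, so the principal part is directly coercive with constant $(1-\beta)\min\{1,p-1\}$ and no competing Hessian term appears. The price is that the coefficient integrals (your family~2) now carry the singular weight $|w_j|^{-\beta}$ and cannot be handled by a naive Young inequality; the paper integrates them by parts once more, and the resulting boundary integrals on $S$ vanish only because of the specific Fermi-coordinate identity $\partial_N K_{(N,N)}|_S=0$. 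This is precisely the ``main obstacle'' you anticipated in your last paragraph, but it arises from the paper's test function, not yours; your scheme trades it for the unresolved $G_\varepsilon'$ problem above.
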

					\begin{proof} At first,  let us say that, a lot of constants are involved in our manipulations; for instance we will recall Remark \ref{rem2} and we will use the following
						\begin{eqnarray*}
&&L_\nabla:=\sup_{B_{2r}(x_0)}|\nabla w|, \qquad L_\delta:=\sup_{B_{2r}(x_0)}(|\delta|+|\nabla \delta|), \qquad l_\delta:=\inf_{B_{2r}(x_0)}|\delta|>0.
						\end{eqnarray*}
						However,  in order not to aggravate the notation,  we will opportunely redefine the constant when is needed without changing the name.
						Let $\epsilon>0$ and $G_{\epsilon}(t)$ be defined by
						$$
						G_\epsilon(t):=\left\{
						\begin{array}{ll}
							0 & \hbox{ if } |s|\leq \epsilon\\
							2t- 2\epsilon\cdot sgn(t)  & \hbox{ if }  \epsilon< |t|< 2\epsilon \\
							t & \hbox{ if } |t| \geq 2 \epsilon. 
						\end{array}
						\right.
						$$
						For any $x_0 \in \Bflat$, let $r>0$ such that $B_{2r}(x_0)\subset \Bflat$; let us fix $\varphi$ such that $\varphi=1$ su $B_{r}(x_0)$, $\displaystyle |\nabla \varphi|<\frac{2}{r}$ on $B_{2r}(x_0)\setminus B_{r}(x_0)$ and $\varphi=0$ otherwise. 
						For any  $\beta<1$ fixed
						$$
						T_{\epsilon}(t)=\frac{G_{\epsilon}(t)}{|t|^{\beta}}, \mbox{ and } \phi=T_{\epsilon}(w_j)\varphi^2.$$
						We can use $\phi$ as a test function obtaining that
						\begin{eqnarray}
							\label{i1}&\displaystyle \int_{\Bflat^\pm} |A\nabla^{\pm} w|^{p-2}|A\nabla^{\pm}w_j|^2 \ T'_{\epsilon}(w_j)\varphi^2\delta \,dx\\
							\label{i2}&\displaystyle+(p-2)\int_{\Bflat^\pm}|A\nabla^{\pm} w|^{p-4}\,  \langle  A\nabla^{\pm}w, A \nabla^{\pm} w_j \rangle^2 \ T'_{\epsilon}(w_j)\varphi^2\delta \ dx\\
							\label{i3}&=-\displaystyle\int_{\Bflat^\pm} |A\nabla^{\pm} w|^{p-2}\langle K_j  \nabla^{\pm} w,  \nabla^{\pm} T_{\epsilon}(w_j) \rangle \varphi^2\delta \  dx\ \\
							\label{i4}&	-(p-2)\displaystyle\int_{\Bflat^\pm}|A\nabla^{\pm} w|^{p-4}\,  \langle A\nabla^{\pm} w, A_j\nabla^{\pm} w \rangle\langle  K\nabla^{\pm}w,  \nabla^{\pm}T_{\epsilon}(w_j) \rangle \varphi^2\delta \ dx\\
							\label{i5}&\displaystyle -\int_{\Bflat^\pm}  |A\nabla^{\pm} w|^{p-2}
							\langle K\nabla^{\pm} w,\nabla^{\pm} T_{\epsilon}(w_j) \rangle  \varphi^2 \delta_jdx\\
							\label{i6}&-2\displaystyle\int_{\Bflat^\pm} |A\nabla^{\pm}  w|^{p-2}\langle K\nabla^{\pm}  w_j, \nabla^{\pm}\varphi\rangle T_{\epsilon}(w_j)\varphi \,\delta dx\\
							\label{i7}&-2\displaystyle\int_{\Bflat^\pm} |A\nabla^{\pm} w|^{p-2}\langle K_j  \nabla^{\pm} w,  \nabla^{\pm} \varphi\rangle T_{\epsilon}(w_j)\varphi\delta \ dx\\
							\label{i8}&-(2p-4)\displaystyle\int_{\Bflat^\pm}|A\nabla^{\pm}  w|^{p-4} \langle K  \nabla^{\pm} w,  \nabla^{\pm} w_j \rangle \langle K\nabla^{\pm}  w,  \nabla^{\pm} \varphi \rangle T_{\epsilon}(w_j)\varphi\delta \ dx\\
							\label{i9}&-(2p-4)\displaystyle\int_{\Bflat^\pm}|A\nabla^{\pm} w|^{p-4} \langle A\nabla^{\pm}  w, A_j\nabla^{\pm}  w \rangle \langle K\nabla^{\pm}  w,  \nabla^{\pm} \varphi \rangle  T_{\epsilon}(w_j)\varphi\delta \ dx\\
							\label{i10}&\displaystyle -2\int_{\Bflat^\pm}  |A\nabla^{\pm} w|^{p-2}
							\langle K\nabla^{\pm} w,\nabla^{\pm} \varphi\rangle  T_{\epsilon}(w_j)\varphi\delta_jdx\\
							\label{i11}&\displaystyle+\int_{\Bflat^-\cup \Bflat^+}[g_j\delta+g\delta_j]T_{\epsilon}(w_j)\varphi^2\,dx \mp\delta_{jN}\int_{S}T^{\mp}(\tilde{g}\tilde{\delta}T_{\epsilon}(w_j)\varphi^2) dH^{n-1} .
						\end{eqnarray}
						where each integral on $\Bflat^+$ involves the classical $\nabla$ whereas each integral on $\Bflat^-$ involves $\nabla^-$.  Note that to deduce the previous equation we need to argue by density to plug in the test functions. This is standard but we remark that for the two terms in \eqref{i11} it is required to exploit the dominate convergence Theorem and the fact that $T_\varepsilon (w_j)$ is  bounded. Let us now focus on \eqref{i1} and \eqref{i2}; note that for $p>2$, \eqref{i2}$\geq0$ and $\eqref{i1}+\eqref{i2}\geq \eqref{i2}$. Similarly, when $p<2$
						\begin{eqnarray*}
							&&(p-2)\int_{\Bflat^\pm}|A\nabla^{\pm} w|^{p-4}\,  \langle  A\nabla^{\pm}w, A \nabla^{\pm} w_j \rangle^2  T'_{\epsilon}(w_j)\varphi^2\delta \ dx\\
							&\geq& (p-2)\int_{\Bflat^\pm}|A\nabla^{\pm} w|^{p-2}\,  |A \nabla^{\pm} w_j|^2 T'_{\epsilon}(w_j)\varphi^2\delta \  dx
						\end{eqnarray*}
						and then
						\begin{eqnarray*}
							\eqref{i1}+\eqref{i2}&\geq& \min\{1,(p-1)\}\int_{\Bflat^\pm} |A\nabla^{\pm} w|^{p-2}|A\nabla^{\pm}w_j|^2 T'_{\epsilon}(w_j)\varphi^2 \delta \ dx\\
							&\geq& c_A^p\min\{1,(p-1)\}\int_{\Bflat^\pm} |\nabla w|^{p-2}|\nabla w_j|^2 T'_{\epsilon}(w_j)\varphi^2 \delta \ dx\\
							&\geq& c_A^pl_\delta\min\{1,(p-1)\}\int_{\Bflat^\pm} \frac{|\nabla w|^{p-2}|\nabla w_j|^2\varphi^2} {|w_j|^\beta}\left(G'_\epsilon(w_j)-\beta\frac{G_\epsilon(w_j)}{|w_j|}\right)\ dx\,.
						\end{eqnarray*}
Note now that the estimates from above in  Remark \ref{rem2}  can be deduced also for  $A_j$ and $K_j$. Therefore there exists $\tilde{C_\gamma}>0$ such that						\begin{eqnarray*}
							|\eqref{i6}|+\ldots+|\eqref{i10}|&\leq& \tilde{C_\gamma}\left( \displaystyle\int_{\Bflat^\pm} \varphi|\nabla w|^{p-2}|\nabla w_j|  |\nabla \varphi| |T_{\epsilon}(w_j)|   \delta\ dx \right. \ \\
							&&+\left. \displaystyle\int_{\Bflat^\pm} \varphi|\nabla w|^{p-1} |\nabla \varphi| |T_{\epsilon}(w_j)| (\delta+|\delta_j|) \ dx \right)\,.
						\end{eqnarray*}
Since $$\displaystyle |\nabla \varphi|\leq \frac{2}{r} \quad\text{and}\quad |T_{\epsilon}(w_j)| \leq |w_j|^{1-\beta}\leq |\nabla w |^{1-\beta},$$  by weighted Young inequality, we get		\begin{eqnarray*}
							&& \displaystyle  \int_{\Bflat^\pm} \varphi|\nabla w|^{p-2}|\nabla w_j|  |\nabla \varphi| |T_{\epsilon}(w_j)|  \delta\ dx+ \displaystyle \int_{\Bflat^\pm} \varphi|\nabla w|^{p-1} |\nabla \varphi| |T_{\epsilon}(w_j)|(\delta+|\delta_j|)  \ dx \\
							&\leq&\frac{2L_\delta}{r}\left(\displaystyle  \int_{\Bflat^\pm\setminus Z_w} \varphi|\nabla w|^{p-2}|\nabla w_j|   |w_j|^{1-\beta} \,dx+ \displaystyle \int_{\Bflat^\pm\setminus Z_w} \varphi|\nabla w|^{p-\beta}\,dx\right) \\
							&\leq& \frac{2L_\delta\theta}{r}\displaystyle  \int_{\Bflat^\pm\setminus Z_w} \frac{\varphi^2|\nabla w|^{p-2}|\nabla w_j| ^2}{ |w_j|^{\beta}}\chi_{\{|w_j|>\epsilon\}} \,dx+\frac{(1+4C_\varphi\theta)L_\delta}{2r\theta}\displaystyle \int_{\Bflat^\pm\setminus Z_w} |\nabla w|^{p-\beta}\,dx,
						\end{eqnarray*}
						where $\displaystyle C_\varphi:=\max_{supp \ \varphi}(\varphi)$. For a fixed constant  $D_1:=D_1(r, \theta, L_\nabla, L_\delta, C_\varphi)$,  since $p>1>\beta$, we conclude that
						\begin{eqnarray*}
							\displaystyle  &&\int_{\Bflat^\pm} \varphi|\nabla w|^{p-2}|\nabla w_j|  |\nabla \varphi| |T_{\epsilon}(w_j)|\delta \ dx+ \displaystyle \int_{\Bflat^\pm} \varphi|\nabla w|^{p-1} |\nabla \varphi| |T_{\epsilon}(w_j)| (\delta+|\delta_j|)\ dx  \\
							&\leq& \frac{2L_\delta\theta}{r}\displaystyle  \int_{\Bflat^\pm\setminus Z_w} \frac{\varphi^2|\nabla w|^{p-2}|\nabla w_j| ^2}{ |w_j|^{\beta}}\chi_{\{|w_j|>\epsilon\}} \,dx+D_1\,.
						\end{eqnarray*}
						By the regularity properties of $g$,  since $\beta<1$, there exists $c_g=c_g(L_\delta, L_\nabla, C_\varphi, g)$ positive,  such that
						\begin{eqnarray*}
							|\eqref{i11}|\leq \int_{\Bflat^-\cup \Bflat^+}(|g_j|\delta+|g\delta_j|)|w_j|^{1-\beta}\varphi^2\,dx +\int_{S}|T^{\mp}(\tilde{g}\tilde{\delta}T_{\epsilon}(w_j)\varphi^2) |dH^{n-1}<c_g\,.
						\end{eqnarray*}
						Taking into account this last, redefining the suitable constants, we get the following:
						\begin{eqnarray}\label{lunga}
							\nonumber &&c_A^pl_\delta\min\{1,(p-1)\}\int_{\Bflat^\pm} \frac{|\nabla w|^{p-2}|\nabla w_j|^2\varphi^2} {|w_j|^\beta}\left(G'_\epsilon(w_j)-\beta\frac{G_\epsilon(w_j)}{|w_j|}\right)\ dx\\ &\leq&  \eqref{i3}+\eqref{i4}+\eqref{i5}\\					\nonumber&&+\frac{2L_\delta\tilde{C}_\gamma\theta}{r}\displaystyle  \int_{\Bflat^\pm\setminus Z_w} \frac{\varphi^2|\nabla w|^{p-2}|\nabla w_j| ^2 }{ |w_j|^{\beta}}\chi_{\{|w_j|>\epsilon\}} \,dx+D_{1,g}\,.
						\end{eqnarray}
						Regarding the estimate of $\eqref{i3}$-$\eqref{i5}$,  we can use a similar approach; therefore let us write the general form
						$$
						\int_{\Bflat^\pm}h(x)\langle M  \nabla^{\pm} w,  \nabla^{\pm} T_{\epsilon}(w_j) \rangle\varphi^2\,dx,
						$$
						where $M:=M(x)$ is a given $n$-matrix ($K$ or $K_j$ respectively) and $h(x)$ is
						\begin{eqnarray*}
						 &&h(x)=|A\nabla^{\pm} w|^{p-2}\delta \qquad \mbox{ in \eqref{i3}}\\
						 &&h(x)=|A\nabla^{\pm} w|^{p-4}\,  \langle A\nabla^{\pm} w, A_j\nabla^{\pm} w \rangle \ \delta\qquad \mbox{ in \eqref{i4}}\\
						 &&h(x)=|A\nabla^{\pm} w|^{p-2}\delta_j\qquad \mbox{ in \eqref{i5}}\,.
						\end{eqnarray*}
						Since $\varphi=0$ on $^cB_{2r}(x_0)$ (see also the argument used in  \eqref{bordodopo} and \eqref{bordodopo2}), integrating by part one can see that
						\begin{eqnarray*}
							\int_{\Bflat^\pm} h\varphi^2\langle K\nabla^{\pm} w,  \nabla^{\pm} T_{\epsilon}(w_j) \rangle  \,dx
							&=&-\int_{S} h\varphi^2 T_{\epsilon}(w_j)\langle K\nabla w,  e_N \rangle  \,ds\\
							&&+\int_{S} h\varphi^2 T_{\epsilon}(w_j)\langle (K\nabla^{-}w)^-,  e_N \rangle  \,ds\\
							&&-\int_{\Bflat^\pm} div(h\varphi^2 (K\nabla^{\pm} w)^\pm) T_{\epsilon}(w_j) \ dx\\
							&=&-\int_{\Bflat^\pm} div(h\varphi^2 (K\nabla^{\pm} w)^\pm) T_{\epsilon}(w_j) \ dx.				\end{eqnarray*}
					More details are needed to see that, for $j=1,\dots,N$
						\begin{equation*}\label{soomabordi}
							\int_{\Bflat^\pm} h\varphi^2\langle K_j\nabla^{\pm} w,  \nabla^{\pm} T_{\epsilon}(w_j) \rangle  \,dx
							=-\int_{\Bflat^\pm} div(h\varphi^2 (K_j\nabla^{\pm} w)^\pm) T_{\epsilon}(w_j) \ dx,
						\end{equation*}
						i.e. to see that
						\begin{equation}\label{soomabordi}
							-\int_{S} h\varphi^2 T_{\epsilon}(w_j)\langle K_j\nabla w,  e_N \rangle  \,ds
							+\int_{S} h\varphi^2T_{\epsilon}(w_j)\langle (K_j\nabla^{-}w)^-,  e_N \rangle  \,ds=0\,.
						\end{equation}
As above we need to distinguish if the Dirichlet or the Neumann condition hold for \eqref{EP}. Under the Neumann condition, for $j=N$ it is enough to note that on $S$, $T_\epsilon(w_N)=0$. For $j=1,\dots, N-1$, one can see by definition of $K$ that
						$$
						\partial_j (K)|_S=-K|_S(\partial_jK^{-1})|_SK|_S,
						$$
						where $K|_S$ can be  deduced by \eqref{kappaBordo} and $\partial_jK^{-1}|_S$ can be obtained by \eqref{kappagen}.
						Therefore is only a technical argument to see that on $S$
						$$\langle K_j\nabla^\pm w,e_N\rangle=\langle (\partial_j K_{(N)})\nabla^\pm w,e_N\rangle=0,$$ 
						(where $K_{(N)}$ represents the last row of the matrix) and \eqref{soomabordi} holds.
						Taking into account Dirichlet conditions, let us immediately notes that for $j=1,\ldots,N-1$, $w_j|_S=0$ therefore $T_\epsilon(w_j)=0$. For $j=N$, 
						$\langle K_N\nabla w,e_N\rangle=\langle \partial_NK_{(N)},\nabla w\rangle$, where $K_{(N)}$ represents the $N$-row of $K$. Then by Remark \ref{remimp}, $\nabla^\pm w|_S=(\bar{0},\pm w_N)$ and we obtain that
						\begin{eqnarray*}
							&&-\int_{S} h\varphi^2 T_{\epsilon}(w_N)\langle K_N\nabla w,  e_N \rangle ds
							+\int_{S} h\varphi^2T_{\epsilon}(w_N)\langle (K_N\nabla^{-}w)^-,  e_N \rangle ds\\
							&=&-\int_{S} h\varphi^2T_{\epsilon}(w_N)\langle \partial_NK_{(N)},\nabla w\rangle ds
							-\int_{S} h\varphi^2T_{\epsilon}(w_N)\langle \partial_NK_{(N)},\nabla^- w\rangle \,ds\\
							&=&-\int_{S} h\varphi^2T_{\epsilon}(w_N)w_N (\partial_N K_{(N,N)}) ds
							+\int_{S} h\varphi^2T_{\epsilon}(w_N)w_N (\partial_N K_{(N,N)}) \,ds,
						\end{eqnarray*}
						(where $K_{(N,N)}$ is the element of the matrix in $N$-row and $N$-column).
\begin{remark}	There is here a little abuse of notations in the case 	
\[h(x)=|A\nabla^{\pm} w|^{p-2}\delta_j\qquad \mbox{ in \eqref{i5}}\,.
\]	
In fact, when integrating by parts the derivatives of $\delta$ on the boundary may have different values depending on which integration domain $\Bflat^+$ or $\Bflat^-$ we are considering. This will make no differences in the computations here below since we actually prove that all these terms are zero.			
\end{remark}

						 Let us prove that $\partial_N K_{(N,N)}|_S=0$.
						In a similar way to what we saw for the Neumann condition, notice that
						$$
						 \partial_N (K^{-1})|_S=-K^{-1}|_S(\partial_NK|_S)K^{-1}|_S\,.
						$$
						Now, $K^{-1}|_S$ can be deduced by \eqref{kappaBordo} and direct computation gives us that $\partial_N (K_{(N,N)}^{-1})=0$. Then
						$$
						 0=\partial_N (K_{(N,N)}^{-1})|_S=\langle e_N(\partial_NK|_S),e_N\rangle=\partial_N K_{(N,N)}|_S
						$$
						hence the claim.
						Therefore  we can compute that,
						\begin{eqnarray*}
							&&\int_{\Bflat^\pm} h\varphi^2\langle M\nabla^{\pm} w,  \nabla^{\pm} T_{\epsilon}(w_j) \rangle  \,dx
							=-\int_{\Bflat^\pm} div( h\varphi^2 (M\nabla^{\pm} w)^\pm) T_{\epsilon}(w_j) \ dx\\
							&=&-\int_{\Bflat^\pm} h\varphi^2div((M\nabla^{\pm} w)^\pm) T_{\epsilon}(w_j) \ dx-\int_{\Bflat^\pm} \langle M\nabla^{\pm} w,  \nabla^{\pm}  (h\varphi^2) \rangle T_{\epsilon}(w_j) \,dx\\
							&=&-\int_{\Bflat^\pm} h\varphi^2div((M\nabla^{\pm} w)^\pm) T_{\epsilon}(w_j) \ dx-\int_{\Bflat^\pm}2\varphi  h\langle M\nabla^{\pm} w,  \nabla^{\pm}  \varphi \rangle T_{\epsilon}(w_j) \,dx\\
							&&-\int_{\Bflat^\pm}\varphi^2 \langle M\nabla^{\pm} w,  \nabla^{\pm} h\rangle T_{\epsilon}(w_j) \,dx\\
							&\leq&\int_{\Bflat^\pm} \varphi^2|h| |div((M\nabla^{\pm} w)^\pm)| |w_j|^{1-\beta} \chi_{\{|w_j|>\epsilon\}}\, dx\\
							&&+\frac{2}{r}\int_{\Bflat^\pm}\varphi |h||M\nabla^{\pm} w| |w_j|^{1-\beta} \chi_{\{|w_j|>\epsilon\}}\,dx\\
							&&+\int_{\Bflat^\pm}\varphi^2| M\nabla^{\pm} w| | \nabla  h| |w_j|^{1-\beta} \chi_{\{|w_j|>\epsilon\}}\,dx\,.
						\end{eqnarray*}
						In our case, $h= |A\nabla^{\pm} w|^{p-2}\delta$, or $h= |A\nabla^{\pm} w|^{p-2}\delta_j$ or $h=\delta |A\nabla^{\pm} w|^{p-4}\,  \langle A\nabla^{\pm} w, A_j\nabla^{\pm} w \rangle$. Therefore there exist a suitable $c_{\gamma,\delta}>0$ such that $|h|\leq c_{\gamma,\delta}|\nabla w|^{p-2}$. In this way, 
						$$
						\frac{2}{r}\int_{\Bflat^\pm}\varphi |h||M\nabla^{\pm} w| |w_j|^{1-\beta}  \chi_{\{|w_j|>\epsilon\}} \,dx\leq \frac{2c_{\gamma,\delta}}{r}\int_{\Bflat^\pm\setminus Z_w}\varphi |\nabla w|^{p-1}|w_j|^{1-\beta} \,dx:=d_1(>0).
						$$
						Next we estimate $|\nabla h|$. For $h= |A\nabla^{\pm} w|^{p-2}\delta$ or $h= |A\nabla^{\pm} w|^{p-2}\delta_j$ then
						$$
						\nabla^{\pm} h=\delta (p-2)| |A\nabla^{\pm} w|^{p-4} [(\langle A\nabla^{\pm} w, A_k\nabla^{\pm} w\rangle)_{k}+(\langle K\nabla^{\pm} w,\nabla^{\pm} w_k\rangle)_{k}]+|A\nabla^{\pm} w|^{p-2}\nabla \delta
						$$
						or
						$$
						\nabla^{\pm} h=\delta_j(p-2)| |A\nabla^{\pm} w|^{p-4} [(\langle A\nabla^{\pm} w, A_k\nabla^{\pm} w\rangle)_{k}+(\langle K\nabla^{\pm} w,\nabla^{\pm} w_k\rangle)_{k}]+|A\nabla^{\pm} w|^{p-2}\nabla \delta_j
						$$
						hence,  passing to the norm, taking into account Remark \ref{rem2}, for both cases we can define $\tilde{c_1}:=\tilde{c_1}(p,L_\delta,C_A,C_K)$ and $\bar{c_2}:=\bar{c_2}(p,L_\delta, C_A,C_K)$ such that
						$$
						|\nabla h|\leq \tilde{c_1} |\nabla w|^{p-2}+\bar{c_2}|\nabla w|^{p-3}|D^2w|
						$$
						The same inequality can be obtained considering $h=\delta|A\nabla^{\pm} w|^{p-4}\,  \langle A\nabla^{\pm} w, A_j\nabla^{\pm} w \rangle$.
						Therefore in both cases, since $\beta<1$ and by using the $W^{1,2}(\Omega)$-regularity (see Remark~\ref{remregcianchi}) the exists a positive $C_w$ constant such that
						\begin{eqnarray*}
							\int_{\Bflat^\pm}\varphi^2 | M\nabla^{\pm} w| | \nabla  h| |w_j|^{1-\beta}  \chi_{\{|w_j|>\epsilon\}}\,dx&\leq&  \tilde{c_1}\int_{\Bflat^\pm\setminus Z_w}\varphi^2| \nabla w|^{p-1}|w_j|^{1-
								\beta} \,dx\\
							&&+\bar{c_2}\int_{\Bflat^\pm\setminus Z_w}\varphi^2|\nabla w|^{p-2} |D^2w| |w_j|^{1-\beta} \,dx\\
							&\leq& C_w.
						\end{eqnarray*}
						In the end, denoting  by $M_{(i)}$ the $i$-row in the matrix $M$ we have
						\begin{eqnarray*}
							|div((M\nabla^{\pm} w)^\pm)|&=&\left|\sum_{i=1}^{N-1}\partial_{x_i}\langle M_{(i)},\nabla^\pm w\rangle\pm\partial_{x_N}\langle M_{(N)},\nabla^\pm w\rangle\right|\\
							&\leq&\sum_{i=1}^{N-1}|\langle M_{(i),i},\nabla w^\pm\rangle|+\sum_{i=1}^{N-1}|\langle M_{(i)},\nabla^{\pm} w_i\rangle|\\
							&&+|\langle M_{(N),N},\nabla w^\pm\rangle|+|\langle M_{(N)},\nabla^{\pm} w_N\rangle|.
						\end{eqnarray*}
Therefore, since $M$ represents $K$ or $K_j$ respectively, there exists a constant $d_K>0$ such that
						\begin{eqnarray*}
							&&\int_{\Bflat^\pm} \varphi^2|h||div((M\nabla^{\pm} w)^\pm)| |w_j|^{1-\beta} \chi_{\{|w_j|>\epsilon\}}\, dx\\&&\leq c_\gamma \int_{\Bflat^\pm} \varphi^2|\nabla^{\pm} w|^{p-2} |div((M\nabla^{\pm} w)^\pm)| |w_j|^{1-\beta} \chi_{\{|w_j|>\epsilon\}}\, \ dx\\
							&&\leq d_K\ \int_{\Bflat^\pm} \varphi^2|\nabla w|^{p-\beta} \ dx+d_K\int_{\Bflat^\pm} \varphi^2|\nabla w|^{p-2} |D^2 w| |w_j|^{1-\beta} \chi_{\{|w_j|>\epsilon\}}\, \ dx\,.
						\end{eqnarray*}
						As above, by the $W^{1,2}(\Omega)$-regularity  we have						$$
						 \int_{\Bflat^\pm} \varphi^2|h||div(M\nabla^{\pm} w)| |w_j|^{1-\beta} \ dx\leq C.
						$$
						Summarizing up for \eqref{i3}-\eqref{i5} we obtain that there exists $\tilde{d}>0$ such that
						$$ 
						|\eqref{i3}|+|\eqref{i4}|+|\eqref{i4}|\leq \tilde{d}.
						$$
						Replacing in $\eqref{lunga}$ we obtain
						
						\begin{eqnarray*}
							&&c_A^pl_\delta\min\{1,(p-1)\}\int_{\Bflat^\pm\setminus Z_w} \frac{|\nabla w|^{p-2}|\nabla w_j|^2\varphi^2} {|w_j|^\beta}\left(G'_\epsilon(w_j)-\beta\frac{G_\epsilon(w_j)}{|w_j|}\right) \ dx\\
							&\leq&\tilde{d}+\frac{2L_\delta\tilde{C}_\gamma\theta}{r}\displaystyle  \int_{\Bflat^\pm\setminus Z_w} \frac{|\nabla w|^{p-2}|\nabla w_j| ^2 \varphi^2}{ |w_j|^{\beta}}\chi_{\{|w_j|>\epsilon\}} \,dx+D_{1,g},
						\end{eqnarray*}
					i.e.
						\begin{eqnarray*}
							\int_{\Bflat^\pm\setminus Z_w} \frac{|\nabla w|^{p-2}|\nabla w_j|^2\varphi^2} {|w_j|^\beta}\Bigg[c_A^pl_\delta\min\{1,(p-1)\}\left (G'_\epsilon(w_j)-\beta\frac{G_\epsilon(w_j)}{|w_j|}\right)\ldots\\\ldots-\frac{2L_\delta\tilde{C}_\gamma\theta}{r}\chi_{\{|w_j|>\epsilon\}}\Bigg] \ dx
						\leq D\,.
					\end{eqnarray*}
						Let now $\theta$ be such that 
$$						\left[c_A^pl_\delta\min\{1,(p-1)\}\left (G'_\epsilon(w_j)-\beta\frac{G_\epsilon(w_j)}{|w_j|}\right)-\frac{2L_\delta\tilde{C}_\gamma\theta}{r}\chi_{\{|w_j|>\epsilon\}}\right]>0.
						$$
						Moreover 
					\begin{eqnarray*}
					\left[c_A^pl_\delta\min\{1,(p-1)\}\left (G'_\epsilon(w_j)-\beta\frac{G_\epsilon(w_j)}{|w_j|}\right)-\frac{2L_\delta\tilde{C}_\gamma\theta}{r}\chi_{\{|w_j|>\epsilon\}}\right]\\
					\to c_A^pl_\delta\min\{1,(p-1)\}\left (1-\beta\right)-\frac{2L_\delta\tilde{C}_\gamma\theta}{r}>0,				\end{eqnarray*}
						as $\epsilon \to 0$ and $\theta$ small. By Fatou Lemma
						$$
						\int_{\Bflat^\pm\setminus Z_w} \frac{\varphi^2|\nabla w|^{p-2} |\nabla w_j|^2}{|w_i|^\beta}dx<+\infty
						$$
						and therefore
						$$
						\int_{B_r(x_0)\setminus Z_w} |\nabla w|^{p-2-\beta} |\nabla w_j|^2<+\infty\,.
						$$
						The proof of \eqref{eq stima hessiano localeTTT} follows now by direct, quite laborious,  computation. We avoid it and we only point out the  fact that the map arising from the Fermi coordinates is of class $C^2$ since the domain is of class $C^3$. 
					\end{proof}

					\section{Proof of the main results}
					We are now ready to prove our main results.
					\begin{proof}[Proof of Theorem \ref{Reg}]
						Let $\varepsilon >0$ be  fixed and let  $u$   be a solution to our problems.  Set 
						$$
						V_{\varepsilon,i}=V_{\varepsilon,i}(u):=|\nabla u|^{(\alpha-1)} G_{\varepsilon}(u_i).
						$$
						Since
						\begin{equation*}\nabla V_{\varepsilon,i}=
							\begin{cases} \displaystyle
								(\alpha-1)|\nabla u|^{(\alpha-2)}\nabla(|\nabla u|) G_{\varepsilon}(u_i)+|\nabla u|^{(\alpha-1)} G'_{\varepsilon}(u_i)\nabla u_i& |u_i|\geq \varepsilon\\
								0 & \text{otherwise},
							\end{cases}
						\end{equation*}
						for a suitable $C=C(\alpha)$, by using Theorem \ref{stimad2} with $\beta=p-2\alpha$, we have that 
						$$
						\int_{\Omega} |\nabla V_{\varepsilon,i}|^2\leq C\int_{\Omega\setminus Z_u}|\nabla u|^{2(\alpha-1)}|D^2u|^2<\infty,
						$$
						hence $V_{\varepsilon,i}\in W^{1,2}(\Omega)$ and it is uniformly bounded in the space.
There exists $\tilde{V}\in W^{1,2}(\Omega)$ such that
$$
 V_{\varepsilon,i}\rightharpoonup \tilde{V}\quad \text{as}\quad\varepsilon \rightarrow 0.
$$
By  the compact embedding $V_{\varepsilon,i}\to \tilde{V}$ in $L^q$ with $q<2^*$ and up to subsequence  $V_{\varepsilon,i}\to \tilde{V}$ a.e..  $V_{\varepsilon,i}\to |\nabla u|^{(\alpha-1)}u_i$ a.e.  therefore
$$
 \tilde{V}=|\nabla u|^{(\alpha-1)}u_i\in W^{1,2}(\Omega),
$$
for  $i=1,\ldots, N$. Then $V_\alpha=|\nabla u|^{(\alpha-1)}\nabla u\in W^{1,2}(\Omega)$.

To conclude the proof, note that for $p<3$ we can choose $\alpha=1$ getting that $u\in W^{2,2}(\Omega)$.
\end{proof}
Let us now prove an important result regarding the summability of the weight. 

\begin{proposition}\label{Reg2}
	Let $u\in C^{1,\alpha}(\bar{\Omega})$ be a weak solution of \eqref{EP}  under  \textit{Dirichlet condition} or \textit{Neumann condition} with  $\Omega\subset \mathbb{R}^n$  satisfying $(H_\Omega)$
	and
	\[
	f\in W^{1,1}(\Omega)\cap L^q(\Omega), \,\, q>N\quad with \quad  f\leq -\tau <0\,\, \text{or}\,\, f\geq \tau >0 \quad a.e. \,\,in \quad  \Omega\,.
	\]
	Then 
	\[\int_{\Omega}\frac{1}{|\nabla u|^{(p-1)\sigma}}\leq C,\] where $\sigma <1$.
\end{proposition}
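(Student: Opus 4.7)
The plan is to couple the weighted second--order estimate \eqref{eq stima hessiano localeTTT} of Theorem~\ref{stimad2} with a pointwise lower bound on the Hessian that the equation produces once the sign hypothesis $|f|\ge\tau>0$ is in force.

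The starting observation is that, by Remark~\ref{remregcianchi}, $u$ belongs to $W^{2,2}_{\mathrm{loc}}$ on every set of the form $\{|\nabla u|\ge\theta\}$, so on $\Omega\setminus Z_u$ one may expand the equation pointwise almost everywhere in the classical form
\begin{equation*}
-|\nabla u|^{p-2}\Delta u-(p-2)|\nabla u|^{p-4}\langle D^2 u\,\nabla u,\nabla u\rangle=f.
\end{equation*}
Using the crude bounds $|\Delta u|\le\sqrt{N}\,|D^2u|$ and $|\langle D^2u\,\nabla u,\nabla u\rangle|\le |D^2u||\nabla u|^2$, this produces the elementary but crucial pointwise inequality
\begin{equation*}
|\nabla u|^{p-2}|D^2 u|\;\ge\;\tau/C(p,N)\quad\text{a.e.\ on }\Omega\setminus Z_u,
\end{equation*}
which, upon squaring, becomes $|D^2 u|^2\ge c^2|\nabla u|^{2(2-p)}$ on the same set. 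This holds for every $p>1$.

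Given $\sigma<1$, I would then set $\beta:=(p-1)\sigma-(p-2)$, which is strictly less than $1$ and hence admissible in Theorem~\ref{stimad2}. Multiplying the previous bound by $|\nabla u|^{p-2-\beta}$ gives
\begin{equation*}
\frac{1}{|\nabla u|^{(p-1)\sigma}}\;\le\;\frac{1}{c^2}\,|\nabla u|^{p-2-\beta}|D^2u|^2\quad\text{a.e.\ on }\Omega\setminus Z_u.
\end{equation*}
Integrating on $\Omega\setminus Z_u$ and invoking \eqref{eq stima hessiano localeTTT} on finitely many balls covering the compact set $\overline\Omega$ (possible precisely because the estimate of Theorem~\ref{stimad2} is global, up to the boundary) one would obtain the desired $\int_{\Omega\setminus Z_u}|\nabla u|^{-(p-1)\sigma}\,dx\le C$. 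The manipulation is uniform in $p>1$: for $p<2$ the exponent $\beta$ comes out automatically positive, while the range of $\sigma$ for which $\beta\le 0$ (which can occur only when $p>2$) is harmless, since there the asserted integrability is trivially implied by the $L^\infty$ bound on $|\nabla u|$ combined with the already available estimate for values of $\sigma$ closer to~$1$.

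It remains to observe that $|Z_u|=0$, so that the integral just produced agrees with the one written in the statement. This is a standard consequence of the strict sign of $f$: a Stampacchia--type argument applied to the components of $\nabla u$ forces $D^2u=0$ almost everywhere on $Z_u$, whereupon the expanded equation would yield $f=0$ on $Z_u$, contradicting $|f|\ge\tau>0$. The genuinely delicate ingredient in this scheme is therefore not the present short a posteriori calculation, but rather the availability of the weighted second--order estimate \emph{up to the boundary}, which is precisely the content of Theorem~\ref{stimad2} obtained through the Fermi--coordinate machinery developed earlier in the paper; once that is granted, Proposition~\ref{Reg2} follows as an almost immediate corollary.
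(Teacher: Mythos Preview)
Your argument is correct and follows a genuinely different, and arguably slicker, route than the paper's. The paper works in the Fermi--coordinate setting and plugs the test function $\psi=\varphi^{2}/(\varepsilon+|\nabla\tilde w|^{p-1})^{\sigma}$ into the weak formulation \eqref{Neqfin}; the sign hypothesis on $f$ then produces a lower bound for the source term, the gradient terms are controlled through Young's inequality together with Theorem~\ref{stimad2} (with the very same choice $\beta=(p-1)\sigma-(p-2)$ you make), a small multiple of the unknown integral is absorbed on the left, and Fatou's lemma as $\varepsilon\to 0$ concludes. Your approach bypasses the auxiliary test function entirely: expanding the equation on $\Omega\setminus Z_u$ immediately gives the pointwise lower bound $|\nabla u|^{p-2}|D^2u|\ge c>0$, and after squaring this converts the global second--order estimate \eqref{eq stima hessiano localeTTT} directly into the integrability of $|\nabla u|^{-(p-1)\sigma}$. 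This is more elementary, avoids the $\varepsilon$--regularisation and the coordinate change, and makes transparent why the sign of $f$ is the driving hypothesis.

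One small imprecision in the final step: applying Stampacchia to the components of $\nabla u$ presupposes $u\in W^{2,1}(\Omega)$, which is not yet available for $p\ge 3$, and the non-divergence ``expanded equation'' is in any case ill-defined on $Z_u$ (the coefficients degenerate or blow up there). The clean fix is to apply Stampacchia instead to the components of $V:=|\nabla u|^{p-2}\nabla u$, which lies in $W^{1,2}(\Omega)$ by \cite{CiMa} (cf.\ Remark~\ref{remregcianchi}): since $\{V=0\}=Z_u$, one gets $DV=0$ a.e.\ on $Z_u$, and the divergence--form identity $-\operatorname{div} V=f$ (valid a.e.\ once $V\in W^{1,2}$) forces $f=0$ a.e.\ on $Z_u$, contradicting $|f|\ge\tau$. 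With this adjustment your proof stands.
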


\begin{proof}
	Without loss of generality, we may, and do, reduce to work in the transformed problem. 
Let therefore $\varphi$ be defined as in Theorem \ref{stimad2}  and let $\varepsilon>0$ and $\alpha<1$. 
Let us consider$$
\psi:=\frac{\varphi^2}{(\varepsilon+|\nabla \tilde w|^{p-1})^\sigma}.
$$   
\begin{remark}
We actually exploit the fact that $|\nabla \tilde w|^{p-1}\in W^{1,2}(\Bflat^+)$ and, by the fact that it is even with respect to the $y_N$-direction, actually $$|\nabla \tilde w|^{p-1}\in W^{1,2}(\Bflat).$$ Consequently by a standard density argument $\psi$ can be used as test function in~\eqref{Neqfin}. \end{remark}
Using equation \eqref{Neqfin} we obtain 
 \begin{eqnarray*}
 &&\int_{\Bflat^\pm}\frac{\tilde g\varphi^2\tilde{\delta}}{(\varepsilon+|\nabla \tilde w|^{p-1})^\sigma}=\int_{\Bflat^\pm}
  |\tilde{A}\nabla^\pm \tilde{w}|^{p-2}\langle (\tilde{K}\nabla^\pm \tilde{w})^{\pm},\nabla \psi\rangle \tilde{\delta}dx\\
  \nonumber	&=&2\int_{\Bflat^\pm}
 \frac{ |\tilde{A}\nabla^\pm \tilde{w}|^{p-2}\varphi}{(\varepsilon+|\nabla \tilde w|^{p-1})^\sigma}\langle (\tilde{K}\nabla^\pm \tilde{w})^{\pm},\nabla \varphi\rangle \tilde{\delta}dx\\
 \nonumber	&&-\sigma(p-1)\int_{\Bflat^\pm}
  \frac{\varphi^2|\tilde{A}\nabla^\pm \tilde{w}|^{p-2}|\nabla\tilde{w}|^{p-2}}{(\varepsilon+|\nabla \tilde w|^{p-1})^{\sigma+1}}\langle (\tilde{K}\nabla^\pm \tilde{w})^{\pm},\nabla |\nabla\tilde{w}|\rangle \tilde{\delta}dx.
\end{eqnarray*}
Now if $\tilde g$ is positive we set 
$\displaystyle \inf_{B_{2\rho(x_0)}}\tilde g:=m_{\tilde g}>0$, if else $ \tilde g$ is negative we set $\displaystyle \inf_{B_{2\rho(x_0)}}-\tilde g:=m_{\tilde g}>0.$
 Denoting  by $\displaystyle l_{\tilde\delta}:=\inf_{B_{2r}(x_0)}|\tilde\delta|>0$ then
\begin{eqnarray}\label{invpe}
	&&m_{\tilde g}l_{\tilde\delta}\int_{\Bflat^\pm}\frac{\varphi^2}{(\varepsilon+|\nabla \tilde w|^{p-1})^\sigma}\leq2\int_{\Bflat^\pm}
	\frac{ |\tilde{A}\nabla^\pm \tilde{w}|^{p-2}\varphi}{(\varepsilon+|\nabla \tilde w|^{p-1})^\sigma}\langle (\tilde{K}\nabla^\pm \tilde{w})^{\pm},\nabla \varphi\rangle \tilde{\delta}dx\\
	\nonumber	&&-\sigma(p-1)\int_{\Bflat^\pm}
	\frac{\varphi^2|\tilde{A}\nabla^\pm \tilde{w}|^{p-2}|\nabla\tilde{w}|^{p-2}}{(\varepsilon+|\nabla  \tilde w|^{p-1})^{\sigma+1}}\langle (\tilde{K}\nabla^\pm \tilde{w})^{\pm},\nabla |\nabla\tilde{w}|\rangle \tilde{\delta}dx.
\end{eqnarray}
Note that, the case $\tilde g$ negative, reduces to the positive case just changing sign in the equation.

It is now  an easy computation to see that the first integral on the right is finite (uniformly in $\varepsilon$), namely
\begin{eqnarray*}
 \int_{\Bflat^\pm} \frac{ |\tilde{A}\nabla^\pm \tilde{w}|^{p-2}\varphi}{(\varepsilon+|\nabla \tilde w|^{p-1})^\sigma}\langle (\tilde{K}\nabla^\pm \tilde{w})^{\pm},\nabla \varphi\rangle \tilde{\delta}dx&\leq& \tilde{C}\int_{\Bflat^\pm}\frac{|\nabla\tilde{w}|^{p-1}\varphi}{(\varepsilon+|\nabla \tilde w|^{p-1})^\sigma}|\nabla \varphi|dx\\&\leq& \frac{2\tilde{C}}{\rho}\int_{\Bflat^\pm}|\nabla\tilde{w}|^{(p-1)(1-\sigma)}\varphi dx\leq C_\rho.
\end{eqnarray*}
In order to give an estimate of the second integral, we will use the Young inequality. Then 
\begin{eqnarray*}&&
\int_{\Bflat^\pm}
\frac{\varphi^2|\tilde{A}\nabla^\pm \tilde{w}|^{p-2}|\nabla\tilde{w}^{p-2}|}{(\varepsilon+|\nabla \tilde w|^{p-1})^{\sigma+1}}\langle (\tilde{K}\nabla^\pm \tilde{w})^{\pm},\nabla |\nabla\tilde{w}|\rangle \tilde{\delta}dx\\&&\leq \bar{C} \int_{\Bflat^\pm}\frac{\varphi^2|\nabla\tilde{w}|^{p-2}|\nabla\tilde{w}|^{p-1}|D^2\tilde{w}|}{(\varepsilon+|\nabla \tilde w|^{p-1})^{\sigma+1}}dx\leq \bar{C} \int_{\Bflat^\pm}\frac{\varphi^2|\nabla\tilde{w}|^{p-2}|D^2\tilde{w}|}{(\varepsilon+|\nabla \tilde w|^{p-1})^{\sigma}}dx
\\
&&\leq \bar{C}\theta \int_{\Bflat^\pm}\frac{\varphi^2}{(\varepsilon+|\nabla \tilde w|^{p-1})^{\sigma}}dx+\frac{\bar{C}}{4\theta}\int_{\Bflat^\pm} \varphi^2|\nabla\tilde{w}|^{p-2}|D^2\tilde{w}|^2\frac{|\nabla\tilde{w}|^{p-2}}{(\varepsilon+|\nabla \tilde w|^{p-1})^{\sigma}}dx\\
&&\leq \bar{C}\theta \int_{\Bflat^\pm}\frac{\varphi^2}{(\varepsilon+|\nabla \tilde w|^{p-1})^{\sigma}}dx+\frac{\bar{C}}{4\theta}\int_{\Bflat^\pm}\varphi^2|\nabla\tilde{w}|^{p-2-\beta}|D^2\tilde{w}|^2dx
\end{eqnarray*}
where $\beta=(p-1)\sigma-(p-2)$, from which one deduces $\beta<1$. Hence we can apply Theorem \eqref{stimad2}.
Back to \eqref{invpe} we get
\begin{eqnarray}
	m_gl_\delta\int_{\Bflat^\pm}\frac{\varphi^2dx}{(\varepsilon+|\nabla \tilde w|^{p-1})^\sigma}&\leq&\bar{C}\theta \int_{\Bflat^\pm}\frac{\varphi^2dx}{(\varepsilon+|\nabla \tilde w|^{p-1})^{\sigma}}dx+\bar{D}.
\end{eqnarray}
Choosing $\theta$ is such a way that $m_gl_\delta-\bar{C}\theta>0$, 
\begin{eqnarray}
	\int_{\Bflat^\pm}\frac{\varphi^2dx}{(\varepsilon+|\nabla \tilde w|^{p-1})^\sigma}&\leq&\bar{D}.
\end{eqnarray}
A classical application of Fatou Lemma, gives us
\begin{eqnarray}
	\int_{\Bflat^\pm}\frac{\varphi^2dx}{|\nabla \tilde w|^{(p-1)\sigma}}\leq\bar{D}\quad\Rightarrow \quad\int_{B_\rho(x_0)}\frac{dx}{|\nabla \tilde w|^{(p-1)\sigma}}\leq\bar{D}.
\end{eqnarray}
This also implicitly implies that the critical set has zero Lebesgue measure.
\end{proof}
\begin{proof}[Proof of Theorem \ref{Reg1}]
Let us split the  $|D^2 \tilde{u}|^q$  as 
\[|D^2 u|^q=|\nabla u|^{\frac{(p-2-\beta)q}{2}}|D^2 u|^q\frac{1}{|\nabla u|^{\frac{(p-2-\beta)q}{2}}}.\]
Then by Theorem \ref{stimad2} we obtain 
\begin{eqnarray}\nonumber
&&\int_{\Omega\setminus Z_ {u}}|D^2u|^qdx\\\nonumber
&&= \int_{\Omega\setminus Z_{u}}|\nabla u|^{\frac{(p-2-\beta)q}{2}}|D^2 u|^q\frac{1}{|\nabla u|^{\frac{(p-2-\beta)q}{2}}}dx\\\nonumber
&&\leq \left(\int_{\Omega\setminus Z_{u}}|\nabla u|^{p-2-\beta}|D^2 u|^2dx\right)^{\frac q2} \left(\int_{\Omega\setminus Z_{u}}\frac{1}{|\nabla u|^{\frac{(p-2-\beta)q}{2-q}}}dx\right)^{\frac{2-q}{2}}\\\nonumber
&&\leq C \left(\int_{\Omega\setminus Z_{u}}\frac{1}{|\nabla u|^{\frac{(p-2-\beta)q}{2-q}}}dx\right)^{\frac{2-q}{2}}.
\end{eqnarray}
Therefore we can choose $\beta <1$ such that \[\frac{(p-2-\beta)q}{2-q}<p-1\]
if $q<(p-1)/(p-2)$ (observe that for $p\geq 3$ and $\beta=1$, $(p-2-\beta)q/{(2-q)} <p-1$ iff $q<(p-1)/(p-2)$). Setting 
$V_{\varepsilon, i}=G_{\varepsilon}(u_i)$, the proof ends following the one of Theorem \ref{Reg}.
\end{proof}

\vspace{1cm}
					
\begin{center}{\bf Acknowledgements}\end{center}  
L. Montoro, L. Muglia and B. Sciunzi are partially supported by PRIN project 2017JPCAPN (Italy): Qualitative and quantitative aspects of nonlinear PDEs, and L. Montoro by  Agencia Estatal de Investigaci\'on (Spain), project PDI2019-110712GB-100.

\					
\begin{center}
 {\sc Data availability statement}\
All data generated or analyzed during this study are included in this published article.
\end{center}

					\
					
\begin{center}
{\sc Conflict of interest statement}
\
The authors declare that they have no competing interest.
\end{center}

\bibliographystyle{elsarticle-harv}

				\end{document}